\newcommand{\beq}{\begin{equation}}
\newcommand{\eeq}{\end{equation}}
\newcommand{\interval}{[K_n, K_{n + 1})}
\newcommand{\Xn}{X_{i, i +j}(n)}
\newcommand{\Ln}{L_{i, i +j}(n)}
\newcommand{\Rn}{R_{i, i +j}(n)}
\newcommand{\glgl}{\lambda_{g,\ell}}
\newcommand\be{\begin{equation}}
\newcommand\ee{\end{equation}}
\newcommand\bea{\begin{eqnarray}}
\newcommand\eea{\end{eqnarray}}
\newcommand\bel{\begin{equation}}
\newcommand\eel{\end{equation}}
\newcommand\beal{\begin{eqnarray}}
\newcommand\eeal{\end{eqnarray}}
\newcommand\bi{\begin{itemize}}
\newcommand\ei{\end{itemize}}
\newcommand\ben{\begin{enumerate}}
\newcommand\een{\end{enumerate}}
\newcommand{\twocase}[5]{#1 \begin{cases} #2 & \text{{\rm #3}}\\ #4 &\text{{\rm #5}} \end{cases}   }
\newcommand{\ncr}[2]{{#1 \choose #2}}
\newcommand{\threecase}[7]{#1 \begin{cases} #2 &
\text{\rm #3}\\ #4 &\text{\rm #5}\\ #6 &\text{\rm #7} \end{cases}   }
\newtheorem{thm}{Theorem}[section]
\newtheorem{lem}[thm]{Lemma}
\newtheorem{rek}[thm]{Remark}
\newtheorem{defi}[thm]{Definition}
\numberwithin{equation}{section}
\newtheorem*{remark*}{Remark}
\numberwithin{remark}{section}
\numberwithin{subsubcase}{subcase}
\numberwithin{subsubsection}{subsection}
\begin{document}
\fancyhead{}
\renewcommand{\headrulewidth}{0pt}
\fancyfoot{}
\fancyfoot[LE,RO]{\medskip \thepage}
\fancyfoot[LO]{\medskip MONTH YEAR}
\fancyfoot[RE]{\medskip VOLUME , NUMBER }

\setcounter{page}{1}

\title[The Average Gap Distribution for Generalized Zeckendorf Decompositions]{The Average Gap Distribution for Generalized Zeckendorf Decompositions}
\author{Olivia Beckwith}
\address{Department of Mathematics, Harvey Mudd College, Claremont, CA 91711}\email{obeckwith@gmail.com}
\thanks{The third and sixth named authors were partially supported by NSF grant DMS0970067 and the remaining authors were partially supported by NSF Grant DMS0850577. It is a pleasure to thank our colleagues from the Williams College 2010, 2011 and 2012 SMALL REU program for many helpful conversations, Florian Luca and the referee for comments on earlier drafts, and Philippe Demontigny for discussions on generalizations.}
\author{Amanda Bower}
\address{Department of Mathematics
and Statistics, University of Michigan-Dearborn, Dearborn, MI 48128}\email{amandarg@umd.umich.edu}

\author{Louis Gaudet}\address{Department of Mathematics, Yale University, New Haven, CT 06510}\email{louis.gaudet@yale.edu} %lou.gaudet@gmail.com

\author{Rachel Insoft}
\address{Department of Mathematics, Wellesley College,  Wellesley, MA 02481}\email{rinsoft@wellesley.edu}

\author{Shiyu Li}
\address{Department of Mathematics, University of California, Berkeley, Berkeley, CA 94720}\email{jjl2357@berkeley.edu}

\author{Steven J. Miller}
\address{Department of Mathematics and Statistics, Williams College, Williamstown, MA 01267}
\email{sjm1@williams.edu, Steven.Miller.MC.96@aya.yale.edu}

\author{Philip Tosteson}
\address{Department of Mathematics and Statistics, Williams College, Williamstown, MA 01267}\email{Philip.D.Tosteson@williams.edu}

\begin{abstract}
An interesting characterization of the Fibonacci numbers is that, if we write them as $F_1 = 1$, $F_2 = 2$, $F_3 = 3$, $F_4 = 5, \dots$, then every positive integer can be written uniquely as a sum of non-adjacent Fibonacci numbers. This is now known as Zeckendorf's theorem \cite{Ze}, and similar decompositions exist for many other sequences $\{G_{n+1} = c_1 G_{n} + \cdots + c_L G_{n+1-L}\}$ arising from recurrence relations. Much more is known. Using continued fraction approaches, Lekkerkerker \cite{Lek} proved the average number of summands needed for integers in $[G_n, G_{n+1})$ is on the order of $C_{{\rm Lek}} n$ for a non-zero constant; this was improved by others to show the number of summands has Gaussian fluctuations about this mean.

Kolo$\breve{{\rm g}}$lu, Kopp, Miller and Wang \cite{KKMW,MW1} recently recast the problem combinatorially, reproving and generalizing these results. We use this new perspective to investigate the distribution of gaps between summands. We explore the average behavior over all $m \in [G_n, G_{n+1})$ for special choices of the $c_i$'s. Specifically, we study the case where each $c_i \in \{0,1\}$ and there is a $g$ such that there are always exactly $g-1$ zeros between two non-zero $c_i$'s; note this includes the Fibonacci, Tribonacci and many other important special cases. We prove there are no gaps of length less than $g$, and the probability of a gap of length $j > g$ decays geometrically, with the decay ratio equal to the largest root of the recurrence relation. These methods are combinatorial and apply to related problems; we end with a discussion of similar results for far-difference (i.e., signed) decompositions.
\end{abstract}

\maketitle

\tableofcontents

\bigskip
\bigskip

%%%%%%%%%%%%%%%%%%%%%%%%%%%%%%%%%%%%%%%%%%%%%%%%%%%%%%%%%%%%%%%%%%%%%%%%%%%%%%%%%%%%%%%%%%%%%%%%%%%%%%%%%%%%%%%%%%%%%%%%%%%%
%%%%%%%%%%%%%%%%%%%%%%%%%%%%%%%%%%%%%%%%%%%%%%%%%%%%%%%%%%%%%%%%%%%%%%%%%%%%%%%%%%%%%%%%%%%%%%%%%%%%%%%%%%%%%%%%%%%%%%%%%%%%
%%%%%%%%%%%%%%%%%%%%%%%%%%%%%%%%%%%%%%%%%%%%%%%%%%%%%%%%%%%%%%%%%%%%%%%%%%%%%%%%%%%%%%%%%%%%%%%%%%%%%%%%%%%%%%%%%%%%%%%%%%%%
\section{Introduction}

We begin by reviewing earlier, related results, and then state the special sequences to be studied in this paper and our results. Though we study  special cases in order to simplify some of the proofs and prevent the ideas from being obscured by technical calculations, we describe the problems in as much generality as possible in the introduction for the benefit of the reader interested in generalizations.

A beautiful theorem of Zeckendorf \cite{Ze} states that every positive integer can be written uniquely as a sum of non-adjacent Fibonacci numbers if we label them $F_1 = 1$, $F_2 = 2$, $F_3 = 3$, $F_4 = 5$ and so on; we must use this non-standard ordering as otherwise the decomposition would not be unique. The standard proof of this is through a greedy algorithm, though this does not provide any information about finer questions. Using continued fraction techniques, Lekkerkerker \cite{Lek} proved that the average number of summands needed for decompositions of numbers in $[F_n, F_{n+1})$ is $n/(\phi^2+1) + O(1)$, where $\phi = (1+\sqrt{5})/2$ is the golden mean. These results have been extended to other sequences, specifically $\{G_n\}$ being a positive linear recurrence sequence. This means that there exist non-negative integers $c_i$ (with $c_1, c_L$ positive) such that \be G_{n+1} \ = \ c_1 G_{n} + \cdots + c_L G_{n+1-L}, \ee as well as rules to specify the first $L$ terms of the sequence and a generalization of the non-adjacency constraint to what is a `legal' decomposition (other authors use the phrase $G$-ary decomposition for a legal decomposition). Informally, a legal decomposition is one where we cannot use the recurrence relation to replace a linear combination of summands with another summand, and the coefficient of each summand is appropriately bounded. The precise result is as follows (see \cite{MW2} for example for a proof).

%We give a rigorous definition of a legal decomposition for the sequences studied in this paper in Lemma \ref{lem:legaldecomp};

\begin{thm}[Generalized Zeckendorf Decomposition]\label{lem:legaldecomp} Consider a positive linear recurrence \be G_{n+1} \ = \ c_1 G_{n} + \cdots + c_L G_{n+1-L} \ee with non-zero integer coefficients, $c_1, c_L > 0$, and initial conditions $G_1 = 1$ and for $1 \le n \le L$ \be G_{n+1} \ =\ c_1 G_n + c_2 G_{n-1} + \cdots + c_n G_{1}+1. \ee  For each positive integer $N$ there exists a unique decomposition $\sum_{i=1}^{m} {a_i G_{m+1-i}}$\label{a_i} with $a_1>0$, the other $a_i \ge 0$, and one of the following two conditions holds:

\begin{itemize}

\item We have $m < L$ and $a_i=c_i$ for $1\le i\le m$.

\item There exists $s\in\{1,\dots, L\}$ such that
\be\label{eq:legalcondition2}
a_1\ = \ c_1,\ a_2\ = \ c_2,\ \dots,\ a_{s-1}\ = \ c_{s-1}\ {\rm{and}}\ a_s<c_s,
\ee
$a_{s+1}, \dots, a_{s+\ell} \ = \  0$ for some $\ell \ge 0$,
and $\{b_i\}_{i=1}^{m-s-\ell}$ (with $b_i = a_{s+\ell+i}$) is either legal or empty.
\end{itemize}
\end{thm}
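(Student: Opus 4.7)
The plan is to prove existence and uniqueness together by strong induction on $N$, anchored by one key lemma: for every $m \ge 1$, the largest value attained by a legal decomposition whose largest summand is $G_m$ equals $G_{m+1} - 1$.

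First I would establish this maximal-value lemma. For $m \le L$ it is immediate from the initial conditions, since $c_1 G_m + c_2 G_{m-1} + \cdots + c_m G_1 = G_{m+1} - 1$ is itself legal and matches the first bullet of the theorem. For $m > L$ I would induct on $m$: taking $a_1 = c_1, \ldots, a_{L-1} = c_{L-1}$, $a_L = c_L - 1$, $\ell = 0$, and appending the inductive maximum $G_{m+1-L} - 1$ for the tail based at $G_{m-L}$ gives the total $c_1 G_m + \cdots + c_L G_{m+1-L} - 1 = G_{m+1} - 1$ by the recurrence. A parallel argument bounds any other legal decomposition with top index at most $m$: a deficit at position $s < L$ leaves prefix sum at most $c_1 G_m + \cdots + c_s G_{m-s+1}$, strictly less than $G_{m+1}$, and inserting extra zeros ($\ell > 0$) only tightens the inductive tail bound, so $s = L$, $\ell = 0$ with a maximal tail is optimal.

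With the lemma in hand, existence follows from a greedy algorithm. Given $N \ge 1$, let $m$ be maximal with $G_m \le N$; the lemma forces $N < G_{m+1}$, so the leading summand is $G_m$. Choose $a_1$ as the largest integer with $a_1 \le c_1$ and $a_1 G_m \le N$, and set $N' = N - a_1 G_m$. If $a_1 < c_1$ then $N' < G_m$ and we recurse on indices $< m$, producing bullet 2 with $s = 1$. If $a_1 = c_1$ then $N' < G_{m+1} - c_1 G_m = c_2 G_{m-1} + \cdots + c_L G_{m+1-L}$ by the recurrence, so the next summand has index $\le m - 1$, and the procedure continues with the forced prefix $a_2 = c_2, a_3 = c_3, \ldots$ until the first strict inequality $a_s < c_s$, matching bullet 2 with $s \ge 2$ (or bullet 1 if the prefix exhausts $N$ while $m < L$).

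For uniqueness, I would compare two legal decompositions of $N$ coefficient by coefficient from the top. Both must share the top index $m$, since otherwise the lemma caps one of them below $G_m \le N$. Let $t$ be the first position where the coefficients differ, say $a_t < a_t'$; by the recursive structure of the legality definition, the tail $\sum_{i > t} a_i G_{m+1-i}$ of the first decomposition equals the value of a legal sub-decomposition whose largest summand has $G$-index at most $m - t$, hence is bounded by $G_{m-t+1} - 1$ via the lemma. Equating the totals at positions $\ge t$ then yields $(a_t' - a_t) G_{m-t+1} \le G_{m-t+1} - 1$, impossible for integer $a_t' > a_t$. The main obstacle is the maximal-value lemma for $m > L$: one must rule out that a cleverly placed early deficit $c_s - a_s > 0$ can be overcompensated by a longer or larger tail, a possibility precisely controlled by the recurrence.
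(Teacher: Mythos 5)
The paper itself does not prove Theorem \ref{lem:legaldecomp}; it quotes the result and points to \cite{MW2} for a proof, so there is no in-paper argument to compare against. Your strategy (the maximal-value lemma that a legal decomposition with top summand $G_m$ has value at most $G_{m+1}-1$, greedy existence, and a first-disagreement uniqueness argument) is the standard route taken in \cite{MW2,KKMW}, and the lemma and the greedy step are set up essentially correctly. One boundary remark: the case $m=L$ falls under the second bullet (with $s=L$), not the first, and the initial conditions as printed, ``for $1\le n\le L$,'' conflict at $n=L$ with the recurrence; the intended range is $1\le n< L$, and with that reading both your identity $c_1G_m+\cdots+c_mG_1=G_{m+1}-1$ and the $s=L$, $a_L=c_L-1$ construction give the correct maximum.

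The genuine gap is in uniqueness. You assert that if $t$ is the first index at which two legal decompositions differ, then the tail $\sum_{i>t}a_iG_{m+1-i}$ of the one with the smaller coefficient ``equals the value of a legal sub-decomposition whose largest summand has $G$-index at most $m-t$,'' hence is at most $G_{m-t+1}-1$. As a statement about an arbitrary position $t$ this is false: truncating a legal decomposition inside a forced prefix can leave a tail exceeding $G_{m-t+1}$. Take $L=2$, $c_1=1$, $c_2=5$, so $G_1,\dots,G_5=1,2,7,17,52$; the legal decomposition of $51$ is $(a_1,a_2,a_3,a_4)=(1,4,1,4)$, and its tail past $t=1$ is $4G_3+G_2+4G_1=34>G_4=17$. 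What rescues the argument is an observation you do not make: a first disagreement cannot occur at an index lying in the forced prefix of both decompositions (there both coefficients equal $c_t$), so $t$ must be the deficit index $s$ of the decomposition carrying the smaller coefficient, and only then is its tail genuinely $\ell$ zeros followed by a legal block of top index at most $m-t$, to which the lemma applies. You must also dispose of the residual case in which the two decompositions share the same deficit index and the same coefficient there; then the two tails are legal decompositions of the same strictly smaller integer and are identified by your strong induction on $N$. Neither repair is long, but the forced-prefix observation is a needed step, not a cosmetic one: without it the inequality $(a_t'-a_t)G_{m-t+1}\le G_{m-t+1}-1$ is unjustified.
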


In addition to the generalization of Zeckendorf's Theorem, Lekkerkerker's theorem generalizes, and the average number of summands is $C_{{\rm Lek}} n + d$ for some $C_{{\rm Lek}} > 0$. Further, the fluctuations of the number of summands above the mean converges to Gaussian behavior; see \cite{BCCSW,DG,FGNPT,Ho,Ke,LT,Len,MW1,MW2,Ste1,Ste2}.

In this paper we pursue a related question, the distribution of gaps between summands in generalized Zeckendorf decompositions. We adopt the perspective of Kolo$\breve{{\rm g}}$lu, Kopp, Miller and Wang \cite{KKMW,MW1,MW2}, which we briefly review. They proved Gaussian behavior by recasting the problem combinatorially, writing down explicit formulas for the number of $m \in [G_n, G_{n+1})$ with exactly $k$ summands through the cookie (or stars and bars) problem. Specifically, in the Fibonacci case consider all integers $m \in [F_n, F_{n+1})$ with exactly $k+1$ summands. If these summands are $F_{i_1}, F_{i_2}, \dots, F_{i_{k+1}=n}$ then the non-adjacency condition implies the gaps are $d_j := i_{j} - i_{j-1} \ge 2$ for $j \ge 2$ and $d_1 := i_1 \ge 1$. Thus $d_1 + \cdots + d_{k+1} = n$. As $d_1 \ge 1$ and $d_j \ge 2$ for $j \ge 2$, this is equivalent to solving $x_1 + \cdots + x_{k+1} = n - (2k+1)$, with each $x_i$ a non-negative integer. By the cookie (or stars and bars) problem, there are $\ncr{n-(2k+1) + k+1 - 1}{k+1-1}$ $=$ $\ncr{n-k-1}{k}$ integral solutions, and hence the number of integers in $[F_n, F_{n+1})$ with exactly $k+1$ summands is $\ncr{n-k-1}{k}$. As there are $F_{n+1}-F_n = F_{n-1}$ integers in $[F_n, F_{n+1})$, the probability density function for having $k+1$ summands is $\ncr{n-k-1}{k}/F_{n-1}$. In this case the proof is completed by using Stirling's formula to see convergence to a Gaussian. The general case is harder, and proceeds via generating functions.

Returning to the gaps, our goal is to analyze the distribution of the $d_j$'s. Given a decomposition \be m \ = \ G_{i_k} + G_{i_{k-1}} + \cdots + G_{i_2} + G_{i_1}, \ee we define the gaps to be \be i_k - i_{k-1}, \ \ \ i_{k-1} - i_{k-2}, \ \ \ \dots, \ \ \ i_2 - i_1; \ee note we do not consider $i_1 - 0$ as a gap (as there are on the order of $n (G_{n+1} - G_n)$ summands, one additional gap for each $m$ has a negligible affect). We look at the behavior of the average gap measure for special recurrences. By this we mean we amalgamate all gaps from all decompositions of all integers in $[G_n, G_{n+1})$, and show that as $n\to\infty$ this average converges to a limiting distribution which decays geometrically, with decay ratio equal to the largest root of the recurrence of the $G_n$'s. We concentrate on the class of recurrence relations below for two reasons. In addition to simplifying the proofs and highlighting the method, more can be proved about the limiting behavior in these cases then in general, and it is thus worth isolating these results. We consider what we shall name Kangaroo recurrences.

%In a sequel paper \cite{BILMT} we show that for each $m \in [G_n, G_{n+1})$ the associated gap measure of $m$ converges almost surely to this average as $n\to\infty$ (as well as determining the limiting distribution of the longest gap in each decomposition).

\begin{defi} Fix integers $\ell, g \ge 1$ and set $L = \ell g+1$. A \textbf{Kangaroo recurrence} is a positive linear recurrence relation where all the $c_i$'s are zero except $c_1 = c_{g+1} = c_{2g+1} = \cdots = c_L = 1$; thus \be K_{n+1} \ = \  K_n + K_{n-g} + K_{n-2g} + \cdots + K_{n-\ell g},\ee with initial conditions $K_1 = 1$, and for $1 \le n \le \ell g+1$
\be K_{n+1} \ =\
c_1 K_n + c_2 K_{n-1} + \cdots + c_n K_{1}+1. \ee Important special cases include the Fibonacci numbers ($g=1$ and $\ell = 1$), the Tribonaccis ($g=1$ and $\ell = 2$) and other similar recurrences. We call $g$ the \textbf{hop} length, and $\ell$ the number of hops. \end{defi}

%The Kangaroo recurrences satisfy the conditions of \cite{MW1,MW2} and lead to a unique legal decomposition, which is specified by the following.

For example, for the Tribonacci numbers the first terms are 1, 2, 4, 7, 13, while for $K_{n+1} = K_n + K_{n-2}$ the sequence begins 1, 2, 3, 4, 6, 9 (notice in this case we decompose 10 as $9+1$ and not $6+3+1$, as we can use the recurrence relation to replace $6+3$ with 9).\footnote{An alternative definition of the Fibonacci numbers are that they are the unique sequence such that every integer can be written uniquely as a sum of non-adjacent elements of the sequence. Similar equivalent definitions hold for the other relations studied.}

There is a nice, closed form expression for the general term in a Kangaroo recurrence. This expansion plays a central role in our investigations.
\begin{lem}[Generalized Binet's Formula]\label{lem:genbinet} Let $\lambda_1, \dots, \lambda_L$ be the roots of the characteristic polynomial of a Kangaroo recurrence with $\ell$ hops of length $g$ (and thus $L = \ell g$). Then $\lambda_1 > |\lambda_2| \ge \cdots \ge |\lambda_L|$, $\lambda_1 > 1$, and there exist constants such that \be K_n \ = \ a_1 \lambda_1^n + O\left(n^{L-2} \lambda_2^n\right). \ee More precisely, if $\lambda_1, \omega_2, \dots, \omega_r$ denote the distinct roots of the characteristic polynomial with multiplicities 1, $m_2, \dots, m_r$, then there are constants $a_1 > 0, a_{i,j}$ such that \be K_n \ = \ a_1 \lambda_1^n + \sum_{i=2}^r \sum_{j=1}^{m_r} a_{i,j} n^{j-1} \omega_i^n. \ee  \end{lem}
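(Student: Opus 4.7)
The plan is to study the characteristic polynomial
\[
p(x) \;=\; x^{\ell g+1} - x^{\ell g} - x^{(\ell-1)g} - \cdots - x^{g} - 1
\]
of the Kangaroo recurrence, isolate the dominant real root, and then apply the classical theory of constant-coefficient linear recurrences. For $x > 0$, dividing $p(x) = 0$ by $x^{\ell g+1}$ turns it into $f(x) = 1$, where
\[
f(x) \;:=\; \sum_{k=0}^{\ell} x^{-(kg+1)}.
\]
The function $f$ is continuous and strictly decreasing on $(0,\infty)$ with $f(0^+) = +\infty$, $f(\infty) = 0$, and $f(1) = \ell + 1 > 1$, so there is a unique positive real root $\lambda_1 > 1$; strict monotonicity of $f$ at $\lambda_1$ forces $p'(\lambda_1) \ne 0$, so $\lambda_1$ is simple.

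The central step is to prove that every other root $\omega$ of $p$ satisfies $|\omega| < \lambda_1$. Suppose for contradiction that $|\omega| \ge \lambda_1$. Applying the triangle inequality to $f(\omega) = 1$ gives
\[
1 \;=\; \Bigl|\,\sum_{k=0}^{\ell} \omega^{-(kg+1)}\,\Bigr| \;\le\; \sum_{k=0}^{\ell} |\omega|^{-(kg+1)} \;\le\; f(\lambda_1) \;=\; 1,
\]
so equality holds throughout. The outer equality forces $|\omega| = \lambda_1$, and equality in the triangle inequality forces the summands to share a common argument. Factoring out $\omega^{-1}$, each $\omega^{-kg}$ must be a positive real, and since the $k=0$ term equals $1$, this yields $\omega^g = \lambda_1^g$, so $\omega = \lambda_1 \zeta$ for some $g$-th root of unity $\zeta$. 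Substituting $\omega = \lambda_1 \zeta$ into $p(\omega)=0$ and using $\zeta^{kg} = 1$ collapses every non-leading term to its value at $\lambda_1$; combined with $p(\lambda_1) = 0$ this reduces the identity to $\lambda_1^{\ell g + 1}(\zeta - 1) = 0$, which forces $\zeta = 1$ and hence $\omega = \lambda_1$, a contradiction.

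With the root separation in place, the standard general-solution theorem for constant-coefficient linear recurrences yields
\[
K_n \;=\; a_1 \lambda_1^n + \sum_{i=2}^{r}\sum_{j=1}^{m_i} a_{i,j}\, n^{j-1}\, \omega_i^n,
\]
with the constants pinned down by the initial conditions. Each non-principal term is bounded by a constant times $n^{m_i - 1} |\lambda_2|^n$, and since $\lambda_1$ is simple the individual non-principal multiplicities are strictly less than the polynomial degree, producing the advertised error $O(n^{L-2} \lambda_2^n)$. Positivity of $a_1$ follows from $K_n > 0$ together with exponential growth: $a_1 = \lim_{n\to\infty} K_n / \lambda_1^n$ exists and is nonnegative, and a vanishing $a_1$ would leave only subdominant (possibly oscillatory) contributions, which is incompatible with the strictly positive, monotone growth of $K_n$ forced by the recurrence and positive initial values.

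The hardest part is the strict inequality $|\omega| < \lambda_1$, specifically eliminating roots of the form $\lambda_1 \zeta$ with $\zeta$ a nontrivial $g$-th root of unity. This step relies on the uniform spacing of the nonzero Kangaroo coefficients; without it, additional roots on the circle $|z| = \lambda_1$ could contribute oscillatory terms of the same order as the main term, and the average-gap analysis carried out later in the paper would be considerably more delicate.
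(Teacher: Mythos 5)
Your proof is correct and follows essentially the same route as the paper's Appendix A argument: establish a unique simple positive root $\lambda_1 > 1$ by a monotonicity/intermediate-value argument, rule out any other root of modulus $\ge \lambda_1$ via the equality case of the triangle inequality, and invoke the standard solution theory for constant-coefficient linear recurrences. Two minor remarks that do not affect correctness: your detour through $g$-th roots of unity is longer than needed (the $k=0$ summand $\omega^{-1}$ sharing the argument of the positive real sum already forces $\omega > 0$, hence $\omega = \lambda_1$), and the positivity of $a_1$ is cleanest via the induction $K_n \ge c\,\lambda_1^n$ (choosing $c$ to work for the first $L$ terms and using that $\lambda_1$ satisfies the recurrence), since positivity and monotonicity of $K_n$ alone are not incompatible with a subdominant growth rate when $|\lambda_2|>1$ --- though the paper's own Step 3 is no more explicit on this point.
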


The main difficulty in the proof is showing that there is a unique root of largest absolute value, and that this root is positive. This follows from the Perron-Frobenius Theorem for irreducible matrices and some additional algebra; for completeness we provide an elementary proof in Appendix \ref{sec:proofgenbinet}, and then give the values of the roots and $a_1$ for three important Kangaroo recurrences (the Fibonaccis, the Tribonaccis, and the Skiponaccis).

The important take-aways from the above lemma are the following: an expansion exists, which is dominated by the unique, simple positive root of the characteristic polynomial (which is the largest root). We often write $\lambda_{g,\ell}$ for the largest root to emphasize its dependence on the parameters $g$ and $\ell$, which we vary in \S\ref{sec:approxglgl}.\\

To state our main result, we first need to set notation.

\begin{defi}[Gap indicator random variables, average gaps] Consider a Kangaroo recurrence $\{K_n\}$.

\begin{itemize}

\item Let $G_{i,i+j}(m;n)$ equal 1 if $K_i$ and $K_{i+j}$ are summands in the generalized Zeckendorf decomposition of $m \in \interval$ and $K_{i+r}$ is not a summand for $0 < r < j$, and $G_{i,i+j}(m;n)$ is zero otherwise. Thus $G_{i,i+j}(m;n)$ is 1 if and only if $m \in \interval$ has a gap of length \emph{exactly} $j$ starting at $K_i$.

\item Set \be X_{i,i+j}(n) \ := \ \sum_{m \in \interval} G_{i,i+j}(m;n); \ee $X_{i,i+j}(n)$ is the number of integers $m \in \interval$ that have a gap of length \emph{exactly}  $j$ starting at $K_i$. Let $P_n(j)$ be the proportion of all gaps from decompositions of $m \in \interval$ that are of length $j$.

\item Let $Y(n)$ denote the total number of gaps between summands of the generalized Zeckendorf decomposition of $m\in\interval$; thus \be Y(n) \ := \ \sum_{i=1}^n \sum_{j=0}^n X_{i,i+j}(n); \ee by the Generalized Lekkerkerker Theorem $Y(n)$ is $\left(C_{{\rm Lek}} n + O(1)\right)\left(K_{n+1}-K_n\right)$ (see \cite{MW1}).

\item The probability of a gap of length $j$, $P_n(j)$, is \be P_n(j) \ := \ \frac{\sum_{i=1}^{n-j} X_{i,i+j}(n)}{Y(n)}. \ee Finally set \be P(j)\ :=\ \lim_{n \to \infty} P_n(j),\ee so $P(j)$ represents the limiting probability that a gap in a decomposition for an integer in $\interval$ has length $j$.

\end{itemize}
\end{defi}

\noindent Our main result is the following.

\begin{thm}\label{thm:kangdistribution} Let $\{K_n\}$ be a Kangaroo recurrence of $\ell$ hops of length $g$, with $\glgl$ the largest root of the characteristic polynomial, $a_1$ the generalized Binet constant in front of $\glgl$, and $C_{\rm Lek}$ the computable Lekkerkerker constant. Then
\begin{equation}
\threecase{P(j)\ =\ }{0}{for $j < g$}{\left(\frac{a_1}{C_{\rm Lek}}\right) \lambda_{g,\ell}^{-2g} \frac{\glgl^{-g} - \glgl^{-\ell g}}{1 - \glgl^{-g}}}{for $j=g$}{(\lambda_{g,\ell}-1)^2 \left(\frac{a_1}{C_{\rm Lek}}\right) \lambda_{g,\ell}^{-j}}{for $j > g$,}
\end{equation} which implies the probability of a gap of length $j$ decays geometrically for $j > g$, with decay ratio equal to $\glgl$.
\end{thm}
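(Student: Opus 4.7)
The plan is to compute $P(j)=\lim_{n\to\infty}\sum_i X_{i,i+j}(n)/Y(n)$ by evaluating $X_{i,i+j}(n)$ combinatorially, summing over $i$, and invoking the generalized Binet formula (Lemma 1.3) together with $Y(n)\sim C_{\rm Lek}\,n\,(K_{n+1}-K_n)\sim C_{\rm Lek}\,n\,a_1(\lambda_{g,\ell}-1)\lambda_{g,\ell}^n$ from the Generalized Lekkerkerker theorem. The three regimes $j<g$, $j>g$, and $j=g$ are handled separately.

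That no gap shorter than $g$ occurs follows from Theorem 1.1: the prefix $a_1=c_1=1,\ a_2=c_2=0,\dots,a_g=c_g=0$ is forced, since any breakpoint $s\in\{2,\dots,g\}$ would demand the impossible $a_s<c_s=0$, so the next nonzero coefficient sits at position $\geq g+1$. Iterating the theorem on the resulting sub-decomposition shows every consecutive pair of summands in a legal Kangaroo decomposition has index-separation at least $g$, so $X_{i,i+j}(n)=0$ and $P(j)=0$ for $j<g$.

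For $j>g$ (the decoupled regime) I would split the decomposition of each $m$ at the gap into a lower part (top $K_i$) and an upper part (top $K_n$, bottom exactly $K_{i+j}$). Since any forbidden block requires $\ell+1$ summands at common spacing $g$ and the gap exceeds $g$, no forbidden block can straddle it, and the two parts are independent given each is legal. The lower count is $K_{i+1}-K_i$; for the upper count, a shift bijection identifies ``legal with top $K_n$ and smallest summand $\geq K_p$'' with ``legal with top $K_{n-p+1}$'' (count $K_{n-p+2}-K_{n-p+1}$), and an inclusion-exclusion isolates the ``bottom exactly $K_{i+j}$'' condition. Using Lemma 1.3 the product is asymptotically $a_1^2(\lambda-1)^3\lambda^{n-j}$ independently of $i$; summing over the bulk of $i$ and dividing by $Y(n)$ yields $P(j)=(\lambda-1)^2(a_1/C_{\rm Lek})\lambda^{-j}$.

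The resonant case $j=g$ is the heart of the theorem: now $(K_i,K_{i+g})$ can belong to a longer run of up to $\ell$ consecutive summands at common spacing $g$, so decoupling fails. I would parameterize this run by nonnegative integers $(r,s)$ with $r+s+2\leq\ell$, where $r$ counts extra members at $K_{i-g},\dots,K_{i-rg}$ in the lower part and $s$ counts extra members at $K_{i+2g},\dots,K_{i+(s+1)g}$ in the upper part. For each $(r,s)$ the run itself is rigidly determined, and the two ``outer'' pieces---a legal decomposition with top $\leq K_{i-(r+1)g-1}$ and a legal decomposition with top $K_n$ and smallest summand $\geq K_{i+(s+2)g+1}$---are independent, since each end of the run is padded by a gap of more than $g$. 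Computing each outer count via Lemma 1.3 and the same shift, summing over admissible $(r,s)$ and over the bulk of $i$, and collapsing the resulting geometric series via $\sum_{k=1}^{\ell-1}\lambda^{-kg}=(\lambda^{-g}-\lambda^{-\ell g})/(1-\lambda^{-g})$ should recover the stated formula for $P(g)$. The main obstacle is the bookkeeping in this step: verifying disjointness of the $(r,s)$ partition, handling the $O(1)$ edge corrections from runs that abut $K_1$ or $K_n$ (where an ``outer'' piece collapses to the empty decomposition), and carrying the exact Binet constants through the shift identification to pin down the leading coefficient correctly.
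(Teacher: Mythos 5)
Your treatment of $j<g$ and $j>g$ is essentially the paper's own argument. The forced block of $g-1$ zeros after each nonzero digit is Lemma \ref{lem:kangshortgap}, and for $j>g$ the split at the gap with lower count $K_{i+1}-K_i$ and upper count $K_{n-i-j+2}-2K_{n-i-j+1}+K_{n-i-j}$ is exactly Lemma \ref{lem:countlem1}; Binet's formula and $Y(n)\sim a_1C_{\rm Lek}\,n(\glgl-1)\glgl^n$ then give $P(j)=(\glgl-1)^2(a_1/C_{\rm Lek})\glgl^{-j}$ just as in the paper.

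The $j=g$ case is where your route diverges from the paper's, and where your final step fails as written. The paper (Lemma \ref{lem:countlem2} and the displayed formula for $P_n(g)$) enumerates each \emph{maximal run} of $b\le\ell-1$ consecutive $g$-gaps exactly once, indexed by its starting position $i$ and its length $b$, with weight $L_{i,g,b}(n)R_{i,g,b}(n)\approx a_1^2(\glgl-1)\glgl^{n}\glgl^{-(b+2)g}$; summing over $b$ produces the pure geometric series $\sum_{b=1}^{\ell-1}\glgl^{-bg}=(\glgl^{-g}-\glgl^{-\ell g})/(1-\glgl^{-g})$ in the stated answer. You instead fix the individual gap $(K_i,K_{i+g})$ and sum over the extensions $(r,s)$ of the run containing it. A run with $b$ gaps (so $b+1$ summands and $r+s=b-1$) is then encountered once for each of its $b$ gaps: there are exactly $b$ admissible pairs $(r,s)$ with $r+s=b-1$, and each contributes the same leading term $a_1^2(\glgl-1)\glgl^{n}\glgl^{-(b+2)g}$ (your outer counts are $\approx a_1\glgl^{i-(r+1)g}$ and $\approx a_1(\glgl-1)\glgl^{n-i-(s+2)g}$, whose product depends only on $r+s$). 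Your sum therefore evaluates to $\sum_{b=1}^{\ell-1}b\,\glgl^{-(b+2)g}$, which is not a geometric series and does not match the stated expression once $\ell\ge 3$ (for $\ell\le 2$ only $b=1$ occurs, so Fibonacci and Tribonacci cannot detect the difference). The sentence ``collapsing the resulting geometric series \dots should recover the stated formula'' is thus exactly the step that breaks: to reproduce the printed formula you must count each maximal run once, as the paper does, rather than once per gap. You should also be aware that this discrepancy is substantive rather than cosmetic: $\sum_i X_{i,i+g}(n)$ as defined literally counts gaps, not runs, and the per-gap count (with the factor of $b$) is the one consistent with the moment identity $\sum_j jP(j)=1/C_{\rm Lek}$, so reconciling your tally with the theorem's formula requires more than bookkeeping.
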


\begin{rek}
As a quick consistency check, note that if $\ell = 1$ and $g=1$ then the probability of a gap of length 1 is zero; this is of course nothing more than a restatement of the definition of a legal decomposition by the shifted Fibonacci (Zeckendorf's theorem). More is true; if $\ell = 1$ there are never gaps of length $g$. This is good, as if there were a gap of length $g$ in this case we could immediately use the recurrence relation to replace it.
\end{rek}

\begin{rek} As the sum of the various gap probabilities must equal 1, we can write $a_1/C_{{\rm Lek}}$ in terms of the largest root $\lambda_{g,\ell}$. This allows us to compute $C_{{\rm Lek}}$ in terms of $a_1$ and $\lambda_{g,\ell}$. \end{rek}

Theorem \ref{thm:kangdistribution} holds in greater generality (see \cite{BILMT} for a generalization). In \S\ref{sec:kangaroo} we prove this case of the general result. As we are working with a very special recurrence, more is true, and we derive asymptotic values for $\glgl$ and hence the probabilities for $\ell$ and $g$ large. Our technique is combinatorial, using inclusion-exclusion and the indicator random variables $X_{i,i+j}(m)$. As an additional example of the power of these methods, in \S\ref{sec:fardiff} we apply these techniques to the far-difference representation of Alpert \cite{Al} (she proved every integer can be written uniquely as a sum of signed Fibonacci summands, subject to certain constraints), determining the distribution of gaps in Theorem \ref{thm:fardiffgap}. We conclude with some remarks on open and related problems.

%%%%%%%%%%%%%%%%%%%%%%%%%%%%%%%%%%%%%%%%%%%%%%%%%%%%%%%%%%%%%%%%%%%%%%%%%%%%%%%%%%%%%%%%%%%%%%%%%%%%%%%%%%%%%%%%%%%%%%%%%%%%
%%%%%%%%%%%%%%%%%%%%%%%%%%%%%%%%%%%%%%%%%%%%%%%%%%%%%%%%%%%%%%%%%%%%%%%%%%%%%%%%%%%%%%%%%%%%%%%%%%%%%%%%%%%%%%%%%%%%%%%%%%%%
%%%%%%%%%%%%%%%%%%%%%%%%%%%%%%%%%%%%%%%%%%%%%%%%%%%%%%%%%%%%%%%%%%%%%%%%%%%%%%%%%%%%%%%%%%%%%%%%%%%%%%%%%%%%%%%%%%%%%%%%%%%%
\section{Distribution of Gaps in Kangaroo Recurrences}\label{sec:kangaroo}

\subsection{Preliminaries}

%The starting point for our investigations is a generalization of Binet's formula, which gives us the growth rate of the $K_n$'s.

%We are interested in studying Kangaroo recurrences mainly for two reasons. First, Kangaroo recurrences help build our intuition for the distribution of gaps in the general case. Second, we can exploit the structure of Kangaroo recurrences to understand the behavior of the largest root of the characteristic polynomial $\gll = \glgl$, which is denoted as such since $\gll$ depends on $\ell$ and $g$. By understanding $\glgl$, we can better understand the distribution of gaps for Kangaroo recurrences.

%For example, the Kangaroo recurrence with 2 hops of length 2 is $K_{n+1}=K_n+K_{n-2}+K_{n-4}$ where $K_{1} = 1, K_{2}=2, K_{3}=3, K_{4}=5,$ and $K_{5}=8.$ Recall, by the generalized Zeckendorf's theorem, we can uniquely decompose any integer in terms of the Kangaroo recurrence, so for instance,  $100 \ = \ K_{10} \ + \ K_{7} \ + \ K_{4} \ + \ K_{2},$ which is a unique decomposition.

Before proving Theorem \ref{thm:kangdistribution} we isolate an important observation, which greatly simplifies the calculations.

\begin{lem}\label{lem:kangshortgap} Let $\{K_n\}$ be a Kangaroo recurrence with $\ell$ hops of length $g$. Then $P_n(j) = 0$ for all $j < g$; in other words, there cannot be a gap of length less than $g$. \end{lem}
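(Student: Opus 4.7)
My plan is to deduce the claim directly from the recursive structure of a legal decomposition given in Theorem \ref{lem:legaldecomp}. The key input is that in a Kangaroo recurrence the coefficient sequence satisfies $c_i = 0$ for every $i \in \{2, 3, \ldots, g\}$ (and more generally $c_i = 0$ whenever $i \not\equiv 1 \pmod{g}$ with $2 \leq i \leq \ell g + 1$), so in the legality template $(c_1, c_2, \ldots, c_L)$ the nonzero entries are separated by runs of $g - 1$ zeros. The strategy is to translate each such forced zero run in the $a_i$'s into a gap of at least $g$ in the $K$-indices.

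Concretely, I would write the decomposition of $m \in [K_n, K_{n+1})$ as $\sum_{i=1}^{m'} a_i K_{m'+1-i}$, so that $K_{m'}$ is the largest summand. In the first case of Theorem \ref{lem:legaldecomp} we have $a_i = c_i$ throughout, so the nonzero $a_i$'s sit at positions $1, g+1, 2g+1, \ldots$ and every gap equals $g$. In the second case there is an $s$ with $a_j = c_j$ for $j < s$ and $a_s < c_s$; since $c_s \neq 0$ is required, $s \in \{1, g+1, 2g+1, \ldots\}$, and $a_1 > 0$ rules out $s = 1$, so $s \geq g+1$. This forces $a_2 = \cdots = a_g = 0$, meaning $K_{m'-1}, \ldots, K_{m'-g+1}$ are not summands; hence the gap from $K_{m'}$ to the next summand is at least $g$.

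The remaining gaps are handled by induction on the number of summands. The tail $\{b_i\}$ appearing in the second case of Theorem \ref{lem:legaldecomp} is itself a legal decomposition of a strictly smaller integer, so by the inductive hypothesis all its internal gaps are $\geq g$. The one delicate point is the transitional gap between the last summand of the matched head and the first summand of the tail: the rightmost head summand sits at position $s - g$ in the $a_i$-indexing (assuming $s \geq 2g+1$; if $s = g+1$ the head has only the top summand), while the tail's first summand sits at position $s + \ell_0 + 1$, where $\ell_0 \geq 0$ is the number of zero $a_i$'s forced immediately after $a_s$ by Theorem \ref{lem:legaldecomp}. Translating back to $K$-indices, the transitional gap is $g + \ell_0 + 1 \geq g$, which completes the argument. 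The main bookkeeping hurdle is this transitional step, but since the forced zero run at and after position $s$ can only widen the gap, the $\geq g$ conclusion is preserved throughout.
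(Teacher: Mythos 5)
Your argument is correct and follows essentially the same route as the paper, which simply observes that the legality condition \eqref{eq:legalcondition2} together with the vanishing of $c_2,\dots,c_g$ forbids choosing any of the $g-1$ summands immediately below a chosen $K_p$. Your write-up is a careful unpacking (with the induction on the tail and the transitional-gap bookkeeping) of what the paper treats as immediate from the definition; the details check out.
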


\begin{proof} This follows immediately from the definition of a legal decomposition; specifically, from \eqref{eq:legalcondition2}, which prevents us from choosing any of the next $g-1$ summands after a chosen summand $K_p$. \end{proof}

%We give a quick sketch of why, highlighting the key idea. Imagine we have the summands $K_p$ and $K_{p-r}$ for some $r \in \{1, 2, \dots, g-1\}$; for definiteness we do the case where $\ell = 2$, $g=3$, $r=1$, and $p$ is the first index (and much greater than 1). We may use the recurrence  \be K_{n+1} \ = \ K_{n} + K_{n-3} + K_{n-6} \ee to replace $F_{p-1}$ with \be F_{p-1} \ = \ K_{p-2} + K_{p-5} + K_{p-8}, \ee and thus our summands start $K_p, K_{p-2}, K_{p-5}, K_{p-8}$. We use the recurrence again and replace $K_{p-2}$ with \be K_{p-2} \ = \ K_{p-3} + K_{p-6} + K_{p-9}; \ee our summands now begin \be K_p,\ K_{p-3},\ K_{p-5},\ K_{p-6},\ K_{p-8},\ K_{p-9}, \ee and we may use the recurrence relation to replace $K_p, K_{p-3}, K_{p-6}$ by $K_{p+1}$. Thus $K_{p+1}$ becomes a summand, contradicting the maximality of $K_p$.

Our combinatorial attack requires us to compute $X_{i,i+j}(n)$ to find $P_n(j)$. We can find $\Xn$ by counting the number of choices of the summands $\{K_1, K_2, \dots, K_n\}$ such that $K_i, K_{i+j}$ and $K_n$ are chosen, no summand whose index is between $i$ and $i + j$ is chosen, and all other indices are free to be chosen subject to the requirement that we have a legal decomposition. Let $\Ln$ and $\Rn$ be the number of ways to choose a valid subset of summands from those before the gap of length $j$ starting at $K_i$ and after the gap (respectively). We note $\Ln$ and $\Rn$ are independent of each other when $j>g$; thus \emph{for $j > g$ we have} \be \Xn\ = \ \Ln \cdot \Rn. \ee As \be K_{n+1} \ =  \ K_n + K_{n-g} + \cdots + K_{n-\ell g}, \ee any time we have a gap of length $j > g$ the recurrence `resets' itself. Thus we have the following lemma.

\begin{lem}\label{lem:countlem1} Let $\{K_n\}$ be a Kangaroo recurrence with $\ell$ hops of length $g$. Consider all $m \in \interval$ with a gap of length $j > g$ starting at $K_i$. The number of valid choices for subsets of summands before the gap, $\Ln$, is given by
\be
\Ln \ = \
K_{i+1} - K_{i},
\ee
and the number of valid choices for subsets of summands after the gap, $\Rn$, is given by
\be
\Rn \ = \
K_{n-i-j+2} - 2K_{n - i - j + 1} +K_{n-i-j}.
\ee
\end{lem}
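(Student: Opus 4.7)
My plan is to exploit the fact that a gap of length $j > g$ decouples the two sides of the decomposition. Every forbidden pattern in a Kangaroo decomposition either comes from two summands within distance less than $g$ of each other, or from the full recurrence pattern $K_p, K_{p+g}, K_{p+2g}, \ldots, K_{p+\ell g}$; in both cases the pattern's consecutive indices are at most $g$ apart, so none can straddle a gap of length $j > g$. Thus legal left portions and legal right portions may be combined freely, and we count them separately.

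For $\Ln$, the legal left portions are exactly the legal decompositions whose largest summand is $K_i$. These are in bijection with the integers $m' \in [K_i, K_{i+1})$ via the generalized Zeckendorf theorem, so $\Ln = K_{i+1} - K_i$.

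For $\Rn$, the legal right portions are the legal decompositions with top summand $K_n$ supported on $\{K_{i+j}, K_{i+j+1}, \ldots, K_n\}$ that include $K_{i+j}$. The key technical ingredient is a shift-invariance lemma: the number of legal decompositions with top summand $K_N$ and support in $\{K_M, K_{M+1}, \ldots, K_N\}$ equals $K_{N-M+2} - K_{N-M+1}$. This holds because the Theorem~\ref{lem:legaldecomp} legality rule depends only on the pattern of nonzero coefficients together with the fixed recurrence coefficients $c_i$, so the reindexing $K_k \mapsto K_{k-M+1}$ is a bijection between the set in question and the legal decompositions with top $K_{N-M+1}$ supported on $\{K_1, \ldots, K_{N-M+1}\}$, which in turn are in bijection with integers in $[K_{N-M+1}, K_{N-M+2})$. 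Then an inclusion--exclusion on whether $K_{i+j}$ is included yields
\[
\Rn \;=\; \bigl(K_{n-i-j+2} - K_{n-i-j+1}\bigr) - \bigl(K_{n-i-j+1} - K_{n-i-j}\bigr) \;=\; K_{n-i-j+2} - 2 K_{n-i-j+1} + K_{n-i-j}.
\]

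The main obstacle will be justifying the shift-invariance lemma rigorously, especially near the initial-condition regime where Theorem~\ref{lem:legaldecomp} has an exceptional case for decompositions of top index less than $L = \ell g + 1$; one must check that the reindexing respects that exceptional case as well. Once shift-invariance is in hand, the rest reduces to the one-line inclusion--exclusion above.
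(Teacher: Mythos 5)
Your proposal is correct and follows essentially the same route as the paper: $\Ln$ is counted via the bijection with integers in $[K_i,K_{i+1})$, and $\Rn$ via an index shift followed by inclusion--exclusion on whether the bottom summand is chosen, yielding $(K_{n-i-j+2}-K_{n-i-j+1})-(K_{n-i-j+1}-K_{n-i-j})$. The decoupling and shift-invariance points you flag as needing care are simply asserted in the paper's proof, so your write-up is, if anything, slightly more explicit about the same argument.
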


\begin{proof}
To count $\Ln$, we count the number of ways to have a legal decomposition that must have the summand $K_i$, and where all other summand choices beforehand are free. It is very important that $j > g$, as this means the summand at $K_{i+j}$ does not interact with the summands earlier than $K_i$ through the recurrence relation. Thus $\Ln$ is the same as the number of legal choices of summands from $\{K_1, K_2, \dots, K_i\}$ with $K_i$ chosen. As each integer in $[K_i, K_{i+1})$ has a unique legal decomposition, we see $\Ln$ equals the number of elements in this interval, which is just $K_{i+1} - K_i$.

To compute $\Rn$, we need to consider how many ways we can choose summands from $\{K_{i+j}, K_{i+j+1}, \dots, K_n\}$ such that $K_i$ and $K_n$ are chosen and the resulting decomposition is legal; since $j > g$ the summands from $K_i$ and earlier cannot affect our choices here. Thus our problem is equivalent to asking how many legal ways there are to choose summands from $\{K_1, K_2, \dots, K_{n-i-j+1}\}$ with $K_1, K_{n-i-j+1}$ both chosen and the rest free. There are many ways to compute this; the simplest is to note that this equals the number of legal choices where we \emph{may or may not choose $K_1$}, minus the number of legal choices where we \emph{do not choose $K_1$}. By a similar argument as above, the first count is $K_{n-i-j+2}-K_{n-i-j+1}$ (as it is the number of legal representations of a number in $[K_{n-i-j+1}, K_{n-i-j+2})$), while the second is $K_{n-i-j+1} - K_{n-i-j}$. The proof is completed by subtracting. \end{proof}

Note that in the above lemma we only compute $\Ln$ and $\Rn$ when the gap $j > g$. The reason is that if we can determine $P_n(j)$ for $j > g$, then since $P_n(j) = 0$ for $j<g$ the law of alternatives implies $P_n(g) = 1 - \sum_{j > g} P_n(j)$. Thus if we can show each limit $P(j)$ exists when $j > g$, then the limit $P(g)$ exists as well. Hence, in some sense the following lemma is not needed. We prefer to give it as it allows us to determine the ratio $a_1/C_{{\rm Lek}}$, which we would not be able to do otherwise.

\begin{lem}\label{lem:countlem2} Let $\{K_n\}$ be a Kangaroo recurrence with $\ell$ hops of length $g$. Consider all $m \in \interval$ with \emph{exactly} $b$ consecutive gaps of length $g$ starting at $K_i$. Then $b \le \ell-1$ and number of valid choices for subsets of summands before the gap, $L_{i,g,b}(n)$, equals
\be
L_{i,g,b}(n) \ = \
K_{i-g},
\ee
and the number of valid choices for subsets of summands after the gap, $R_{i,g,b}(n)$, equals
\be
R_{i,g,b}(n) \ = \
K_{n-i-(b+1)g+1} - K_{n-i-(b+1)g}.
\ee
\end{lem}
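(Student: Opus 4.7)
The plan is to translate ``exactly $b$ consecutive gaps of length $g$ starting at $K_i$'' into three local constraints on which summands appear in the decomposition of $m$: (i) the indices $i, i+g, i+2g, \ldots, i+bg$ are all summand-indices; (ii) the largest summand-index strictly smaller than $i$, if any, is at most $i-g-1$ (the cluster does not extend below $K_i$); and (iii) the smallest summand-index strictly larger than $i+bg$ (which exists because $K_n$ is a summand and $n > i+bg$) is at least $i+(b+1)g+1$ (the cluster does not extend above $K_{i+bg}$). Each of the three assertions of the lemma then corresponds to one of (i), (ii), (iii).

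For the bound $b \le \ell - 1$: if instead $b = \ell$, then (i) gives the $\ell+1$ summands $K_i, K_{i+g}, \ldots, K_{i+\ell g}$. The Kangaroo recurrence $K_{i+\ell g+1} = K_{i+\ell g} + K_{i+(\ell-1)g} + \cdots + K_i$ allows us to collapse this run into the single summand $K_{i+\ell g+1}$, contradicting the legality criterion of Theorem~\ref{lem:legaldecomp}.

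For $L_{i,g,b}(n)$: by (ii), the summands of index less than $i$ form a legal decomposition using only summands from the truncated initial segment $\{K_1, \ldots, K_{i-g-1}\}$, and any such legal decomposition is admissible because the gap of at least $g+1$ between its largest summand and $K_i$ decouples the left part from the cluster above $K_i$ through the recurrence. By the uniqueness of generalized Zeckendorf decompositions applied to this truncated segment, such legal left parts (including the empty one) biject with the integers of $[0, K_{i-g})$, giving $L_{i,g,b}(n) = K_{i-g}$.

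For $R_{i,g,b}(n)$: by (iii) the summands of index greater than $i+bg$ lie in $\{K_{i+(b+1)g+1}, \ldots, K_n\}$ and include $K_n$. Shifting all indices down by $i+(b+1)g$ preserves legality (the Kangaroo recurrence is translation-invariant in the indices), so these right parts biject with legal decompositions on $\{K_1, \ldots, K_{n-i-(b+1)g}\}$ whose largest summand is $K_{n-i-(b+1)g}$; these are counted by the integers in $[K_{n-i-(b+1)g}, K_{n-i-(b+1)g+1})$, yielding the claimed expression. The main obstacle is recognizing that ``exactly'' forces \emph{both} boundary gaps (below $K_i$ and above $K_{i+bg}$) to strictly exceed $g$; once that is in hand, the rest is a routine truncation-and-shift application of the uniqueness of legal decompositions.
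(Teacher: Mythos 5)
Your proposal is correct and follows essentially the same route as the paper: the bound $b\le\ell-1$ via collapsing a full run through the recurrence, the observation that exactness forces the nearest summands on either side to lie at distance at least $g+1$, and then counting the left part as legal decompositions drawn from $\{K_1,\dots,K_{i-g-1}\}$ (your bijection with the integers in $[0,K_{i-g})$ is the same count the paper obtains by telescoping over the largest chosen index and adding $1$ for the empty choice) and the right part by an index shift to decompositions with largest summand forced. No substantive differences to report.
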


\begin{proof} We must have $b \le \ell -1$, as if $b \ge \ell$ then we could use the recurrence relation. Remember by Lemma \ref{lem:kangshortgap} the smallest gap is at least $g$. As we have exactly $b$ consecutive gaps of length $g$ starting at $K_i$, the first summand chosen to the \emph{left} of $K_i$ is at most $K_{i-g-1}$, while the first summand chosen to the \emph{right} of $K_{i+bg}$ is at least $K_{i+(b+1)g+1}$. This implies that $L_{i,g,b}(n)$ equals the number of ways to legally choose summands from $\{K_1, K_2, \dots, K_{i-g-1}\}$; however, unlike our previous case now we do not need to choose $K_{i-g-1}$. We claim the number of valid choices is $K_{i-g}$. To see this, note that we can condition on the largest element being $K_r$ for $1 \le r \le i-g-1$. Each $r$ corresponds to the number of legal decompositions in $[K_r, K_{r+1})$, yielding $K_{r+1}-K_r$ choices. We sum over $r$, and the sum telescopes to $K_{i-g}-K_1 = K_{i-g}-1$. We then add 1 to take into account the case where no summands are chosen, proving the claim.

Similarly, by shifting indices we find $R_{i,g,b}(n)$ is equivalent to the number of ways to legally choose summands from $\{K_{i+(b+1)g+1}, \dots, K_n\}$ with $K_n$ chosen. Equivalently, this equals the number of legal choices of summands from $\{K_1, K_2, \dots, K_{n-i-(b+1)g}\}$ with $K_{n-i-(b+1)g}$ chosen, which is just $K_{n-i-(b+1)g+1} - K_{n-i-(b+1)g}$. \end{proof}

\subsection{Proof of Theorem \ref{thm:kangdistribution}}

We now prove our main result. We use little-oh and big-Oh notation for the lower order terms, which do not matter in the limit. If \be \lim_{x \to \infty} \frac{F(x)}{G(x)} \ = \ 0, \ee we write $F(x) = o(G(x))$ and say $F$ is little-oh of $G$, while if there exist $M,x_0 > 0$ such that
$|F(x)| \leq M G(x)$ for all $x > x_0$ we write $F(x) = O(G(x))$ and say $F$ is big-oh of $G$. In particular, $o(1)$ represents a term that decays to zero as $n\to\infty$, while $O(1)$ represents a term bounded by a constant.

\begin{proof}[Proof of Theorem \ref{thm:kangdistribution}] There are three cases: $j < g$, $j=g$ and $j > g$. The first case is the easiest, as $P_n(j) = 0$ for all $j < g$ by Lemma \ref{lem:kangshortgap}. \\

We now consider $j>g$. We need to compute $\lim_{n \to \infty} \frac{1}{Y(n)}\sum_{i=1}^{n-j}X_{i, i+j}(n)$.  By Lemma \ref{lem:countlem1}, \be \Xn \ = \  \Ln \cdot \Rn \ = \ \left(K_{i+1}-K_{i}\right) \cdot \left(K_{n-i-j+2} - 2K_{n - i - j + 1} +K_{n-i-j}\right). \ee By Lemma \ref{lem:genbinet}, \be K_r\ =\ a_1 \glgl^r + O\left(r^{L-2} \lambda_2^r\right),\ee with $|\lambda_2| < \glgl$. Thus \bea \Xn & \ = \ & a_1 \glgl^i (\glgl - 1) \left(1 + o(1)\right) \cdot a_1 \glgl^{n-i-j} (\glgl^2 - 2\glgl + 1) \left(1 + o(1)\right) \nonumber\\ &=& a_1^2 (\glgl - 1)^3 \glgl^n \cdot \glgl^{-j} \left(1 + o(1)\right). \eea As \be Y(n)\ =\ \left(C_{{\rm Lek}} n + O(1)\right) (K_{n+1}-K_n) \ = \ a_1 C_{{\rm Lek}} n (\glgl - 1) \glgl^n \left(1 + o(1)\right) \ee and for any fixed $j$ the sum over $i$ is $n + O(1)$, we find \bea P_n(j) & \ = \ & \frac{\sum_{i=1}^{n-j} X_{i,i+j}(n)}{Y(n)} \nonumber\\ &=& \frac{\sum_{i=1}^{n-j} a_1^2 (\glgl - 1)^3 \glgl^n \cdot \glgl^{-j} \left(1 + o(1)\right)}{a_1 C_{{\rm Lek}} n (\glgl - 1) \glgl^n \left(1 + o(1)\right)} \ = \ \frac{a_1 (\glgl - 1)^2}{C_{{\rm Lek}}} \glgl^{-j} \left(1 + o(1)\right),\ \ \ \eea and the limit clearly exists for each $n$ and each $j > g$.\\

We now turn to $j=g$. We avoid double counting by using the $L_{i,g,b}(n)$ and $R_{i,g,b}(n)$ from Lemma \ref{lem:countlem2}. To find the number of gaps of length $g$, we look at all strings of $b \le \ell-1$ consecutive gaps of length $g$ over all possible starting places $i$, and find \bea P_n(g) & \ = \ & \frac{\sum_{i=1}^{n-g} \sum_{b=1}^{\ell-1} L_{i,g,b}(n) R_{i,g,b}(n)}{Y(n)}. \eea We now substitute for $L_{i,g,b}(n)$ and $R_{i,g,b}(n)$ with the values from Lemma \ref{lem:countlem2}. The proof is completed by using the generalized Binet formula (Lemma \ref{lem:genbinet}), the geometric series formula, and collecting the terms.
\end{proof}

\begin{rek} As a safety check, we confirm the theorem for the Fibonacci and Tribonacci numbers (the formulas for the roots and $a_1$ are at the end of Appendix \ref{sec:proofgenbinet}, and the value of $C_{{\rm Lek}}$ follows from \cite{MW1}). For the Fibonacci numbers, $\lambda_1 = \phi = (1+\sqrt{5})/2$, $a_1 = \phi/\sqrt{5}$ (remember these are the `shifted' Fibonacci numbers as originated with Zeckendorf and Lekkerkerker), and $C_{{\rm Lek}} = 1/(\phi^2+1)$. We see $P(1) = 0$ and the sum of the other probabilities is 1. For the Tribonacci numbers, $P(1)$ is no longer zero. We have $C_{{\rm Lek}} = a_1 (3 \lambda_1^2 - 1) / (\lambda_1^3 (\lambda_1^2 - 1))$, which does lead to the probabilities summing to 1. \end{rek}

%Subsection 2.2 {Approximating $\glgl$ and Relevant Probability Ratios}
\subsection{Approximating $\glgl$ and Relevant Probability Ratios}\label{sec:approxglgl}

We end this section with some quick approximations for $\glgl$ when $\ell \to \infty$. The ability to isolate such results is one of the motivations for studying these special recurrences.

\begin{lem}\label{lem:approxglgl1}
For a Kangaroo recurrence of $\ell$ hops of length $g$, for large $\ell, g$ we have $\glgl$ $\approx$ $\left(1+\frac{\alpha}{g}\right)$,
where $\alpha \approx \log(g)-\log(\log(g))+\frac{\log(\log(g))}{\log(g)}$. In particular, for large $\ell$ and $g$ we have \be \glgl \ \approx \ 1, \ \ \ \glgl^{-g} \ \approx \ e^{-\alpha(g)} \ \approx \ \frac{\log g}{g}. \ee
\end{lem}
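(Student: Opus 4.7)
The plan is to start from the characteristic equation of the Kangaroo recurrence and peel it apart asymptotically, in two stages: first send $\ell\to\infty$ to simplify the right-hand side, then use the ansatz $\lambda_{g,\ell}=1+\alpha/g$ to reduce the problem to the transcendental equation $\alpha e^{\alpha}\approx g$, which we solve by bootstrap iteration.

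First I would write the characteristic polynomial. Since $K_{n+1}=K_n+K_{n-g}+\cdots+K_{n-\ell g}$, the equation $\lambda^{\ell g+1}=\lambda^{\ell g}+\lambda^{(\ell-1)g}+\cdots+\lambda^0$ holds. Dividing by $\lambda^{\ell g}$ and summing the resulting geometric series in $\lambda^{-g}$ gives
\[
\lambda-1 \;=\; \frac{\lambda^{-g}-\lambda^{-(\ell+1)g}}{1-\lambda^{-g}}.
\]
Since $\lambda_{g,\ell}>1$ by Lemma~\ref{lem:genbinet}, the term $\lambda^{-(\ell+1)g}$ is exponentially small in $\ell$ (and we will verify a posteriori that $\lambda^{-g}$ stays bounded away from $1$), so in the regime $\ell,g\to\infty$ the defining relation reduces to
\[
\lambda-1 \;\approx\; \frac{\lambda^{-g}}{1-\lambda^{-g}}, \qquad\text{equivalently}\qquad \lambda^{-g}\;\approx\;1-\tfrac{1}{\lambda}.
\]

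Next I would substitute $\lambda=1+\alpha/g$, with $\alpha=\alpha(g)$ to be determined. Then $\lambda^{-g}=\exp(-g\log(1+\alpha/g))=e^{-\alpha}(1+O(\alpha^2/g))$, and $\lambda-1=\alpha/g$. Plugging in and clearing denominators yields
\[
\alpha(1-e^{-\alpha}) \;\approx\; g\,e^{-\alpha}.
\]
I would then show that $\alpha\to\infty$ with $g$ (so $e^{-\alpha}\to 0$ and the factor $1-e^{-\alpha}$ contributes only a lower-order correction), reducing the equation to the clean form $\alpha e^{\alpha}\approx g$. Taking logarithms gives $\alpha\approx\log g-\log\alpha$, which is the standard Lambert-$W$ fixed-point relation.

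I would finish by bootstrapping this fixed-point equation. The zeroth iterate is $\alpha_0=\log g$; substituting gives $\alpha_1=\log g-\log\log g$; substituting once more and expanding $\log(\log g-\log\log g)=\log\log g+\log(1-\tfrac{\log\log g}{\log g})=\log\log g-\tfrac{\log\log g}{\log g}+O\!\left(\tfrac{(\log\log g)^2}{(\log g)^2}\right)$ yields
\[
\alpha \;\approx\; \log g-\log\log g+\frac{\log\log g}{\log g},
\]
as claimed. Finally, $\lambda_{g,\ell}^{-g}\approx e^{-\alpha}$ together with the identity $e^{-\alpha}=\alpha/g$ (from $\alpha e^{\alpha}=g$) at the leading order gives $\lambda_{g,\ell}^{-g}\approx(\log g)/g$, which is the second claim.

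The main obstacle I anticipate is not the computation itself, which is straightforward once set up, but the two legitimacy checks: (i) justifying that $\lambda^{-(\ell+1)g}$ is truly negligible compared to $\lambda^{-g}$ uniformly in the stated regime (equivalently, that $\alpha$ does not drift too close to $0$ and $\ell$ is large enough that the $\ell$-dependent tail is subleading), and (ii) controlling the error accumulated through the two substitutions $\lambda^{-g}=e^{-\alpha}(1+O(\alpha^2/g))$ and the bootstrap, so that the displayed three-term expansion for $\alpha$ really captures the asymptotics to the stated precision. Both are handled by standard $o(1)$ bookkeeping, but care is required to state in what joint regime of $g$ and $\ell$ the approximation $\lambda_{g,\ell}\approx 1+\alpha/g$ actually holds.
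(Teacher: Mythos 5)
Your proposal is correct and follows essentially the same route as the paper: both sum the geometric series in $\lambda^{-g}$ as $\ell\to\infty$ to get the transcendental relation $\lambda^{-g}\approx 1-1/\lambda$, substitute $\lambda^g=e^{\alpha}$ (i.e., $\lambda\approx 1+\alpha/g$) to reduce to $\alpha e^{\alpha}\approx g$, and extract the three-term expansion for $\alpha$. Your explicit bootstrap iteration just spells out what the paper calls ``direct substitution.''
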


\begin{proof} We drop all lower order terms in the arguments below, as our goal is to highlight the limiting behavior. We derive a transcendental equation from the characteristic polynomial of the Kangaroo recurrence. Notice that $K_{n+1} \approx \glgl K_n$. Consider the Kangaroo recurrence $K_{n+1}=K_n + K_{n-g}+ \cdots + K_{n-\ell g}$. The left hand side approximately equals $\glgl K_n$, and for $\ell$ large the right hand side is essentially
\begin{equation}
K_n \sum\limits_{m=0}^\ell \glgl^ {-mg}\ \approx\ K_n \sum\limits_{m=0}^\infty \glgl^{-mg} \ = \  \frac{K_n}{1 - \glgl^{-g}}.
\end{equation}
Therefore
\bea
 \glgl &\ \approx \ & \frac{1}{1- \glgl^{-g}}, \eea which implies \be\label{eq:glglapproxsubt}  \glgl^g - \glgl^{g-1} -1 \ \approx \ 0.
\ee
From the generalized Binet formula we know that $\glgl$ grows exponentially with $g$. For large $g$ we can write \be \glgl^g \  = \  e^{\alpha(g)}, \ \ \ {\rm or} \ \ \ \glgl \ = \ e^{\alpha(g)/g} \ \approx \ 1 + \frac{\alpha(g)}{g}. \ee Substituting into \eqref{eq:glglapproxsubt} yields
\be
e^{\alpha(g)} -e^{\alpha(g)} \left(1+\frac{\alpha(g)}{g}\right)^{-1} -1\ \approx\ 0,
\ee
which we rewrite as
\bea
\alpha(g) e^{\alpha(g)} \ \approx\  g.
\eea
By direct substitution we see that for $g$ sufficiently large, \be \alpha(g)\ \approx\ \log(g)-\log(\log(g))+\frac{\log(\log(g))}{\log(g)}.\ee
\end{proof}

Now that we have the probability distribution for gaps for Kangaroo recurrences, we can study the ratios of the probabilities of certain events. Since gaps of length $g$ are a threshold event, a natural question to ask is what is the probability of getting a gap of length $g$ compared to any other gap length?

\begin{lem}
For large $g$ and $\ell, n \to\infty$, the ratio of the probability of obtaining a gap of length $g$ to a gap of length exceeding $g$ is
\begin{equation}
\frac{{\rm Prob(gap\ of \ length\ }g)}{{\rm Prob(gap\ of\ length\ at\ least\ }g+1)} \ \approx \ \frac{\log g}{g}.
\end{equation}
%where $\alpha(g) \approx \log(g) - \log(\log(g)) +\frac{\log(\log(g))}{\log(g)}$.
\end{lem}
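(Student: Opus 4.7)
The plan is to obtain a closed-form expression for the ratio using Theorem \ref{thm:kangdistribution}, then feed in the asymptotic estimates from Lemma \ref{lem:approxglgl1}. Write $\lambda = \glgl$ throughout.

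First I would compute the denominator. By the $j > g$ case of Theorem \ref{thm:kangdistribution}, the probability of a gap of length at least $g+1$ is a geometric series
\[
\sum_{j=g+1}^\infty P(j)\ =\ (\lambda-1)^2 \frac{a_1}{C_{\rm Lek}} \sum_{j=g+1}^\infty \lambda^{-j}\ =\ (\lambda-1)^2 \frac{a_1}{C_{\rm Lek}}\cdot \frac{\lambda^{-g-1}}{1-\lambda^{-1}}\ =\ (\lambda-1)\frac{a_1}{C_{\rm Lek}}\lambda^{-g}.
\]
The numerator $P(g)$ is given directly by the theorem, and for $\ell \to \infty$ the term $\lambda^{-\ell g}$ is negligible (since $\lambda>1$ and $\ell g \to \infty$), so $P(g) = \frac{a_1}{C_{\rm Lek}}\cdot \frac{\lambda^{-3g}}{1-\lambda^{-g}}\bigl(1+o(1)\bigr)$. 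Taking the ratio, the $a_1/C_{\rm Lek}$ factors cancel and we get
\[
\frac{P(g)}{\sum_{j>g} P(j)}\ =\ \frac{\lambda^{-2g}}{(\lambda-1)(1-\lambda^{-g})}\bigl(1+o(1)\bigr).
\]

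Next I would invoke the transcendental relation derived in the proof of Lemma \ref{lem:approxglgl1}. The approximation $\lambda^g - \lambda^{g-1} - 1 \approx 0$ rearranges to $\lambda^{g-1}(\lambda-1) \approx 1$, equivalently
\[
\lambda-1\ \approx\ \lambda \cdot \lambda^{-g}, \qquad 1-\lambda^{-g}\ \approx\ \lambda^{-1}.
\]
Substituting both into the ratio gives
\[
\frac{P(g)}{\sum_{j>g} P(j)}\ \approx\ \frac{\lambda^{-2g}}{(\lambda \cdot \lambda^{-g})\cdot \lambda^{-1}}\ =\ \lambda^{-g}.
\]
Finally, Lemma \ref{lem:approxglgl1} already tells us $\lambda^{-g} \approx \log(g)/g$ for large $g$, yielding the claimed asymptotic.

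The only real obstacle is the usual bookkeeping: one must verify that the approximations from Lemma \ref{lem:approxglgl1} (which were derived by dropping lower-order terms and assuming $\ell$ large) are consistent with the regime of the present lemma, and that the $\lambda^{-\ell g}$ correction in $P(g)$ is indeed absorbed in the $o(1)$. Both are routine: for any fixed large $g$, letting $\ell \to \infty$ first kills the $\lambda^{-\ell g}$ term, while the estimates $\lambda-1 \asymp \lambda^{-g} \asymp \log(g)/g$ show that the cancellation leading to $\lambda^{-g}$ is not swamped by the error terms.
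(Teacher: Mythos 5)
Your proposal is correct and follows essentially the same route as the paper: form the ratio from Theorem \ref{thm:kangdistribution} (summing the geometric series for the denominator and dropping the $\glgl^{-\ell g}$ term in $P(g)$), arriving at $\glgl^{-2g}/\bigl((\glgl-1)(1-\glgl^{-g})\bigr)$, and then apply the relations $\glgl-1 \approx \glgl^{1-g}$ and $1-\glgl^{-g}\approx \glgl^{-1}$ together with $\glgl^{-g}\approx \log(g)/g$ from Lemma \ref{lem:approxglgl1}. The paper compresses this into ``some algebraic manipulation''; your write-up simply makes that manipulation explicit.
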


\begin{proof}
From Theorem \ref{thm:kangdistribution}, we have
\begin{equation}
\frac{{\rm Prob(gap\ of \ }g)}{{\rm Prob(gap \ at \ least \ }g+1)} \ = \  \frac{\glgl^{-2g} \glgl^{-g} / (1 - \glgl^{-g})}{\glgl^{-g}(\glgl-1)}.
\end{equation}
For large $g$, $\ell$, and $n$, we use Lemma \ref{lem:approxglgl1} and some algebraic manipulation to deduce the claim.
\end{proof}

%%%%%%%%%%%%%%%%%%%%%%%%%%%%%%%%%%%%%%%%%%%%%%%%%%%%%%%%%%%%%%%%%%%%%%%%%%%%%%%%%%%%%%%%%%%%%%%%%%%%%%%%%%%%%%%%%%%%%%%%%%%%
%%%%%%%%%%%%%%%%%%%%%%%%%%%%%%%%%%%%%%%%%%%%%%%%%%%%%%%%%%%%%%%%%%%%%%%%%%%%%%%%%%%%%%%%%%%%%%%%%%%%%%%%%%%%%%%%%%%%%%%%%%%%
%%%%%%%%%%%%%%%%%%%%%%%%%%%%%%%%%%%%%%%%%%%%%%%%%%%%%%%%%%%%%%%%%%%%%%%%%%%%%%%%%%%%%%%%%%%%%%%%%%%%%%%%%%%%%%%%%%%%%%%%%%%%
\section{Far-Difference Representations}\label{sec:fardiff}

We now consider a natural generalization, signed Zeckendorf decompositions. Alpert \cite{Al} proved that every positive integer has a unique representation as a sum of signed shifted Fibonacci numbers, where the gap between opposite signed summands must be at least 3, and between same signed summands must be at least 4; this is called the \textbf{far-difference} representation. Miller and Wang \cite{MW1} proved the generalized Lekkerkerker theorem holds here as well, and proved Gaussian behavior for the number of summands (it is a bivariate Gaussian, as there are positive and negative summands).

Our techniques generalize immediately and yield formulas for the distribution of average gaps. We concentrate on gaps between any adjacent summands, though similar reasoning would yield results restricted to gaps between same signed or opposite signed summands. Before stating and proving these generalizations, we first introduce some definitions and useful results.

Given an integer $m$, we write its far-difference representation by \be m \ = \ \epsilon_{i_k} F_{i_k} + \epsilon_{i_{k-1}} F_{i_{k-1}} + \cdots + \epsilon_{i_1} F_{i_1}, \ \ \ \epsilon_j \in \{-1,1\}. \ee Let $\mathcal{N}({\epsilon_i}F_i,{\epsilon_j}F_j)$ denote the number of numbers whose far-difference decomposition starts with ${\epsilon_i}F_i$ and ends with ${\epsilon_j}F_j$, where the $\epsilon$'s are $\pm 1$; similarly, let $\mathcal{N}({\epsilon_j}F_j)$ be the number of numbers whose decomposition ends with ${\epsilon_j}F_j$. We consider integers in the interval $(S_{n-1}, S_n]$, where \be S_n \ = \ F_n + F_{n-4} + F_{n-8} + F_{n-12} + \cdots; \ee the interval is a bit different than before as we have a signed decomposition, and have the ability to overshoot and then correct through subtraction.

\begin{lem}\label{lem:prereqforfardiff} We have
\be \mathcal{N}({\epsilon_i}F_i,{\epsilon_j}F_j)\ =\ \mathcal{N}({\epsilon_i}F_1,{\epsilon_j}F_{j-i+1})\ee
and
\be \mathcal{N}(-F_1,+F_j) + \mathcal{N}(+F_1,+F_j)\ =\ \mathcal{N}(+F_j)-\mathcal{N}(+F_{j-1}).\ee Note \be \mathcal{N}(+F_r) \ = \ S_r - S_{r-1},\ee and by symmetry $\mathcal{N}(+F_r) = \mathcal{N}(-F_r)$.
\end{lem}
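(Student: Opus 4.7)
The plan is to prove each of the four parts by exhibiting explicit bijections on legal far-difference decompositions, exploiting Alpert's uniqueness theorem to identify decompositions with the integers they represent and the fact that the legality rules (gap $\ge 3$ between opposite-signed summands, $\ge 4$ between same-signed ones) depend only on consecutive index differences and on the agreement pattern of adjacent signs. I interpret the first slot of $\mathcal{N}(\cdot,\cdot)$ as the smallest-index summand and the second slot as the largest-index summand, as forced by the shapes of the formulas being proved.

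For $\mathcal{N}(\epsilon_i F_i, \epsilon_j F_j) = \mathcal{N}(\epsilon_i F_1, \epsilon_j F_{j-i+1})$, I would use the index-shift bijection that subtracts $i-1$ from every index and leaves all signs fixed. Under this map the smallest summand becomes $\epsilon_i F_1$, the largest becomes $\epsilon_j F_{j-i+1}$, and every consecutive gap is preserved, so legality transfers in both directions. For the second identity I would partition the decompositions counted by $\mathcal{N}(+F_j)$ --- those whose largest summand is $+F_j$ --- according to whether the index $1$ is used. If it is, the sign at index $1$ is either $+1$ or $-1$, contributing $\mathcal{N}(-F_1,+F_j) + \mathcal{N}(+F_1,+F_j)$. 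If it is not, every index is $\ge 2$, and subtracting $1$ from all indices gives a bijection onto legal decompositions with largest summand $+F_{j-1}$, contributing $\mathcal{N}(+F_{j-1})$. Rearranging this two-way split yields the claim.

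The identity $\mathcal{N}(+F_r) = S_r - S_{r-1}$ follows from the bijection, implicit in the definition of $S_n$ and in Alpert's theorem, between positive integers in $(S_{r-1}, S_r]$ and legal far-difference decompositions whose largest summand is $+F_r$. The symmetry $\mathcal{N}(+F_r) = \mathcal{N}(-F_r)$ comes from the global sign-negation involution $\epsilon_{i_t} \mapsto -\epsilon_{i_t}$, which sends a decomposition of $m$ to one of $-m$ and swaps the two sides; legality is preserved because the gap thresholds care only about whether adjacent signs agree, a condition invariant under uniform sign flip. The main thing to double-check across all parts is that the same-sign-versus-opposite-sign gap constraints are respected by the shift and negation maps; since those constraints depend only on consecutive index differences and adjacent-sign agreement (not on absolute indices or absolute signs), the verifications are essentially automatic, making this step routine rather than a genuine obstacle.
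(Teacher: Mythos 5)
Your proposal is correct and follows essentially the same route as the paper: an index-shift bijection for the first identity, the ``all minus those avoiding index $1$'' inclusion--exclusion for the second, the cardinality of $(S_{r-1},S_r]$ for the third, and sign negation for the symmetry. The paper's proof is only a one-sentence sketch of exactly these steps, so your version simply supplies the details (including the correct reading of the two slots of $\mathcal{N}(\cdot,\cdot)$ and the check that the gap constraints are shift- and negation-invariant).
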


\begin{proof} The proof is similar to the proofs from \S\ref{sec:kangaroo}, proceeding by shifting indices down for the first part, and inclusion-exclusion for the second. The final claim on $\mathcal{N}(+F_r)$ follows by noting this is the cardinality of $(S_{r-1}, S_r]$. \end{proof}

We can similarly determine the average gap behavior. For the Fibonacci numbers, Binet's formula (Lemma \ref{lem:genbinet}) is \be F_n \ = \ a_n \lambda_1^n + a_2 \lambda_2^n \ = \ \frac{\phi}{\sqrt{5}} \phi^n + \frac1{\phi\sqrt{5}} (-1/\phi)^n, \ee where $\phi = \frac{1+\sqrt{5}}2$ is the Golden mean.

\begin{thm}\label{thm:fardiffgap} As $n\to\infty$, the probability $P(j)$ of a gap of length $j$ in a far-difference decomposition of integers in $(S_{n-1}, S_n]$ converges to geometric decay for $j \ge 4$, with decay constant equal to the golden mean. Specifically, if $a_1 = \phi/\sqrt{5}$ (the Binet constant of the largest root, $\phi = (1+\sqrt{5})/2$), then $P(j) = 0$ if $j \le 2$ and  \be\twocase{P(j) \ = \ }{\frac{10a_1\phi}{\phi^4-1} \phi^{-k}}{if $j \ge 4$}{\frac{5a_1}{\phi^2(\phi^4-1)}}{if $j = 3$.} \ee \end{thm}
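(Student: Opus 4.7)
The plan is to mirror the structure of the proof of Theorem \ref{thm:kangdistribution}, splitting into three regimes: $j \le 2$, $j \ge 4$, and $j = 3$. For $j \le 2$ the claim is immediate, since the far-difference representation forbids any gap shorter than $3$ (the minimum allowed even for opposite-signed summands), so $X_{i,i+j}(n) = 0$ and $P(j) = 0$.

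For $j \ge 4$ I would count $X_{i,i+j}(n)$ by a left/right factorization analogous to Lemma \ref{lem:countlem1}, but now carrying signs at the gap endpoints. Fix signs $\epsilon_i$ and $\epsilon_{i+j}$ at the boundary summands; since $j \ge 4$, every combination of same-sign and opposite-sign pairings is legal, so the endpoint signs do not propagate constraints into the blocks outside the gap and the left and right counts decouple. Using that $m \in (S_{n-1}, S_n]$ forces the top summand to be $+F_n$, and summing over both sign choices at each endpoint, gives
\be
X_{i,i+j}(n) \ = \ \Big(\sum_{\epsilon_i}\mathcal{N}(\epsilon_i F_i)\Big)\cdot\Big(\sum_{\epsilon_{i+j}}\mathcal{N}(\epsilon_{i+j} F_{i+j},+F_n)\Big).
\ee
The shift invariance and telescoping identities in Lemma \ref{lem:prereqforfardiff}, together with $\mathcal{N}(\pm F_r) = S_r - S_{r-1}$, then collapse this to
\be
X_{i,i+j}(n) \ = \ 2\,(S_i-S_{i-1})\bigl(S_{n-i-j+1}-2S_{n-i-j}+S_{n-i-j-1}\bigr).
\ee

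Next I would plug in Binet asymptotics. Summing the geometric series in $S_r = F_r+F_{r-4}+F_{r-8}+\cdots$ using $F_n = a_1\phi^n + O(\phi^{-n})$ yields $S_r = \frac{a_1\phi^{r+4}}{\phi^4-1} + O(1)$. Hence $S_i - S_{i-1}$ contributes $(\phi-1)\phi^{i-1}$ and the second-difference factor contributes $(\phi-1)^2\phi^{n-i-j-1}$, each multiplied by the constant $\frac{a_1\phi^4}{\phi^4-1}$. The product makes $X_{i,i+j}(n)$ asymptotically independent of $i$ and proportional to $\phi^{n-j}$, so summing $i=1,\dots,n-j$ multiplies by $n$. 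Dividing by $Y(n) = (C_{\mathrm{Lek}} n + O(1))(S_n-S_{n-1})$ with $C_{\mathrm{Lek}} = 1/5$ from \cite{MW1}, and simplifying via $\phi-1 = \phi^{-1}$ together with $\phi^2 = \phi+1$, yields the claimed geometric decay $P(j) = \frac{10 a_1 \phi}{\phi^4-1}\phi^{-j}$ for $j \ge 4$.

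For $j = 3$ the same splitting goes through, but only \emph{opposite}-sign pairings $(\epsilon_i,\epsilon_{i+3})$ are admissible since same-sign summands require gap at least $4$. The symmetry $\mathcal{N}(+F_r) = \mathcal{N}(-F_r)$ makes the allowed count exactly half of the naive sum, leading to
\be
X_{i,i+3}(n) \ = \ (S_i-S_{i-1})\bigl(S_{n-i-2}-2S_{n-i-3}+S_{n-i-4}\bigr),
\ee
and the same asymptotic analysis delivers the stated value of $P(3)$. The main obstacle I foresee is justifying the sign decoupling cleanly: verifying that for $j \ge 4$ all four sign combinations genuinely contribute independently (no hidden interaction with the neighboring summands outside the gap), and that for $j = 3$ the opposite-sign requirement is the only additional constraint imposed by the gap. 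This is a local legality check from Alpert's gap rules; once it is settled, the remainder is bookkeeping with Binet's formula and the geometric series defining $S_r$.
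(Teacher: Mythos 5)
Your proposal is correct and follows essentially the same route as the paper: split on the sign configurations at the two ends of the gap (noting that $j=3$ forces opposite signs while $j\ge 4$ allows all four combinations), reduce the left/right counts to $\mathcal{N}(\pm F_r)$ and $\mathcal{N}(\epsilon F_1, +F_{n-i-j+1})$ via the shift and telescoping identities of Lemma \ref{lem:prereqforfardiff}, and then apply Binet asymptotics for $S_k$ together with $Y(n)$ from \cite{MW1}. The only difference is cosmetic ordering---the paper works out $j=3$ in detail and sketches $j\ge 4$, while you do the reverse---and your explicit formulas for $X_{i,i+j}(n)$ and the resulting constants match the stated probabilities.
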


\begin{proof} We first count gaps of length 3 in far-difference representations of integers in $(S_{n-1}, S_n]$. Let $X_{i,i+3}(n)$ be the number of representations with a gap between of length 3 from $F_i$ to $F_{i+3}$ (we may assume $i+3 < n-4$ and not worry about boundary effects, which are lower order). We note that since the gap length is 3, the sign of the $F_i$ term is the opposite of the sign of the $F_{i+3}$ term.

This gives us two cases. In the first case, we have $+F_i$ and $-F_{i+3}$  in the decomposition, while in the second case we have $-F_i$ and $+F_{i+3}$. Thus there are $\mathcal{N}(+F_i) \mathcal{N}(-F_1,+F_{n-(i+2)})$ decompositions in case 1 and $\mathcal{N}(+F_i) \mathcal{N}(+F_1,+F_{n-(i+2)})$ decompositions in case 2.
By Lemma \ref{lem:prereqforfardiff} the sum of the number of legal decompositions in the two cases is
\begin{equation}
X_{i,i+3}(n) \ =\ \mathcal{N}(+F_i)\left(\mathcal{N}(+F_{n-(i+2)})-\mathcal{N}(+F_{n-(i+3)})\right).
\end{equation}
The proof for gaps of length 3 is completed by using $\mathcal{N}(+F_j)\ = \ S_j-S_{j-1}$, Binet's formula and the geometric series formula to show \be S_k \ = \  \frac{a_1\phi^{k}}{1\ -\ \frac{1}{\phi^4}} + O(1), \ee and the fact that the number of summands $Y(n)$ satisfies \be Y(n) \ = \ \left(\frac{1}{5}n\ +\ \frac{366-118\sqrt{5}}{20} + o(1)\right) (S_n - S_{n-1}) \ee (see \cite{MW1} for a proof of this last result). We simply substitute these results into \be P_n(j) \ = \  \frac{1}{Y(n)} \sum_{i=1}^{n-j}X_{i,i+j}(n), \ee simplify and then take the limit as $n\to\infty$.

The case of $j \ge 4$ is easier, as now the ends' parities are independent of each other. The claim follows from a similar calculation.
\end{proof}

%%%%%%%%%%%%%%%%%%%%%%%%%%%%%%%%%%%%%%%%%%%%%%%%%%%%%%%%%%%%%%%%%%%%%%%%%%%%%%%%%%%%%%%%%%%%%%%%%%%%%%%%%%%%%%%%%%%%%%%%%%%%
%%%%%%%%%%%%%%%%%%%%%%%%%%%%%%%%%%%%%%%%%%%%%%%%%%%%%%%%%%%%%%%%%%%%%%%%%%%%%%%%%%%%%%%%%%%%%%%%%%%%%%%%%%%%%%%%%%%%%%%%%%%%
%%%%%%%%%%%%%%%%%%%%%%%%%%%%%%%%%%%%%%%%%%%%%%%%%%%%%%%%%%%%%%%%%%%%%%%%%%%%%%%%%%%%%%%%%%%%%%%%%%%%%%%%%%%%%%%%%%%%%%%%%%%%
\section{Conclusion and Future Work}

In this work we studied the average gap distribution arising from special positive linear recurrence sequences. The techniques generalize immediately to any such sequence where each recursion coefficient $c_i$ is positive (though it is harder to get good asymptotic expansions as in \S\ref{sec:approxglgl}); more involved formulas exist if this condition is not satisfied. It is straightforward to see there is geometric decay for gaps larger than the recurrence length; this is essentially due to the fact that the large separation makes the left and right parts independent. Using moment techniques, in the sequel paper \cite{BILMT} these claims are proved and extended further. They associate a gap measure to each integer in the interval $[G_n, G_{n+1})$ and show that as $n\to\infty$ almost surely each individual gap measure converges to the average gap measure, and determine the distribution of the longest gap between summands. It is quite interesting that the gap problems are significantly easier than counting the number of summands; this is very different than similar problems in random matrix theory, where the eigenvalue densities of many structured ensembles are known, but not the gaps between adjacent eigenvalues (or, if known, these results are very recent and require significantly more machinery than is needed for the densities).

The combinatorial vantage here, which is an outgrowth of \cite{KKMW,MW1}, is useful for a variety of other problems. Conditioning on the number of summands is useful in investigations of the longest gap in generalized Zeckendorf decompositions \cite{BILMT} and the asymptotic average of the number of terms in the Ostrowski $\alpha$-decomposition \cite{BCCSW}; one natural future project would be to study the distribution of gaps in the $\alpha$-decomposition, as well as considering this and more general signed decompositions. Additionally, the far-difference decomposition from \cite{Al} can be generalized to Kangaroo recurrences. This is being investigated in \cite{DM}, where similar results to the ones in this paper are proved.

\appendix
%%%%%%%%%%%%%%%%%%%%%%%%%%%%%%%%%%%%%%%%%%%%%%%%%%%%%%%%%%%%%%%%%%%%%%%%%%%%%%%%%%%%%%%%%%%%%%%%%%%%%%%%%%%%%%%%%%%%%%%%%%%%
%%%%%%%%%%%%%%%%%%%%%%%%%%%%%%%%%%%%%%%%%%%%%%%%%%%%%%%%%%%%%%%%%%%%%%%%%%%%%%%%%%%%%%%%%%%%%%%%%%%%%%%%%%%%%%%%%%%%%%%%%%%%
%%%%%%%%%%%%%%%%%%%%%%%%%%%%%%%%%%%%%%%%%%%%%%%%%%%%%%%%%%%%%%%%%%%%%%%%%%%%%%%%%%%%%%%%%%%%%%%%%%%%%%%%%%%%%%%%%%%%%%%%%%%%

\section{Proof of Binet's Formula for Kangaroo Recurrences}\label{sec:proofgenbinet}

While the Generalized Binet's Formula, Lemma \ref{lem:genbinet}, follows from the Perron-Frobenius Theorem for irreducible matrices and some additional algebra, for completeness we provide an elementary proof. As it is no harder to prove the result in greater generality, we do so as this is needed in the sequel paper \cite{BILMT}; Lemma \ref{lem:genbinet} follows immediately by choosing appropriate values of the constants $c_i$.

\begin{thm}[Generalized Binet's Formula]\label{thm:genbinetf} Consider the linear recurrence \be G_{n+1} \ = \ c_1 G_n + c_2 G_{n-1} + \cdots + c_{L} G_{n+1-L}  \ee with the $c_i$'s non-negative integers and $c_1, c_{L} > 0$. Let $\lambda_1, \dots, \lambda_L$ be the roots of the characteristic polynomial  \be f(x) \ := \ x^{L} - \left(c_1 x^{L-1} + c_2 x^{L-2} + \cdots + c_{L-1} x + c_L\right) \ = \ 0, \ee ordered so that $|\lambda_1| \ge |\lambda_2| \ge \cdots \ge |\lambda_L|$. Then $\lambda_1 > |\lambda_2| \ge \cdots \ge |\lambda_L|$, $\lambda_1 > 1$ is the unique positive root, and there exist constants such that \be G_n \ = \ a_1 \lambda_1^n + O\left(n^{L-2} \lambda_2^n\right). \ee More precisely, if $\lambda_1, \omega_2, \dots, \omega_r$ denote the distinct roots of the characteristic polynomial with multiplicities 1, $m_2, \dots, m_r$, then there are constants $a_1 > 0, a_{i,j}$ such that \be G_n \ = \ a_1 \lambda_1^n + \sum_{i=2}^r \sum_{j=1}^{m_r} a_{i,j} n^{j-1} \omega_i^n. \ee \end{thm}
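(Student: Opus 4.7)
The plan is to reduce the characteristic equation $f(x)=0$ (for $x\ne 0$) to the equivalent form
\[
g(x) \ := \ \frac{c_1}{x} + \frac{c_2}{x^2} + \cdots + \frac{c_L}{x^L} \ = \ 1,
\]
and then exploit the monotonicity of $g$ on $(0,\infty)$ together with the triangle inequality on $\mathbb{C}$ to identify a unique dominant root. This replaces the Perron--Frobenius machinery with a direct, elementary argument.

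First I would observe that $g$ is continuous and strictly decreasing from $+\infty$ to $0$ on $(0,\infty)$, so $g(x)=1$ has a unique positive solution $\lambda_1$. Since $g(1)=c_1+c_2+\cdots+c_L\ge c_1+c_L\ge 2$, monotonicity forces $\lambda_1>1$. Next, for any complex root $z$ of $f$,
\[
1 \ = \ |g(z)| \ \le \ \frac{c_1}{|z|} + \cdots + \frac{c_L}{|z|^L} \ = \ g(|z|),
\]
so $|z|\le\lambda_1$ by the monotonicity of $g$. To rule out other roots of modulus $\lambda_1$, I would note that equality in the triangle inequality forces every nonzero term $c_i z^{-i}$ to be a positive real sharing a common argument; since $c_1>0$, the $i=1$ term gives that $z^{-1}$ is a positive real, hence $z=\lambda_1$. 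Thus $\lambda_1$ is the unique root of maximum modulus, giving the inequality $\lambda_1>|\lambda_2|\ge\cdots\ge|\lambda_L|$.

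Then I would show $\lambda_1$ is a simple root by computing, with the help of $\lambda_1^L=\sum_{i=1}^L c_i\lambda_1^{L-i}$,
\[
\lambda_1 f'(\lambda_1) \ = \ L\lambda_1^L \ - \ \sum_{i=1}^{L-1}(L-i)\,c_i\lambda_1^{L-i} \ = \ \sum_{i=1}^L i\,c_i \lambda_1^{L-i},
\]
which is strictly positive because $c_1>0$ and $\lambda_1>0$, so $f'(\lambda_1)\ne 0$. The closed-form expansion $G_n=a_1\lambda_1^n+\sum_{i\ge 2}\sum_{j=1}^{m_i}a_{i,j}\,n^{j-1}\omega_i^n$ then follows from the standard theory of linear recurrences with constant coefficients: the coefficients $\{a_{i,j}\}$ are uniquely determined by the initial conditions via a generalized Vandermonde system, and isolating the $\lambda_1$-term and bounding the rest by $O(n^{L-2}|\lambda_2|^n)$ gives the claimed error estimate.

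The main obstacle will be verifying $a_1>0$. For this I would first show $G_n>0$ for all $n$ by induction (the prescribed initial values are positive and the $c_i$ are non-negative integers with $c_1>0$). Since $\lambda_1>|\lambda_j|$ for $j\ge 2$, dividing the expansion by $\lambda_1^n$ gives $G_n/\lambda_1^n\to a_1$. If $a_1=0$ we would have $G_n=o(\lambda_1^n)$, which I would rule out either by computing the residue at $x=1/\lambda_1$ of the generating function $\sum_n G_n x^n=P(x)/(1-c_1x-\cdots-c_Lx^L)$ and checking that it is nonzero from positivity of the initial data, or by a Poincar\'e-type ratio argument showing $G_{n+1}/G_n\to\lambda_1$. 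This positivity step is the most delicate piece; the dominant-root argument (Step~2) is the conceptual heart of the proof, while Steps~1, 3, and the closed form are routine calculations.
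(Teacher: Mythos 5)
Your proposal is correct and follows essentially the same route as the paper: the normalization $g(x)=\sum_i c_i x^{-i}=1$ is just $f(x)=0$ divided by $x^L$, and your monotonicity argument, triangle-inequality/equal-phase argument, and the identity $\lambda_1 f'(\lambda_1)=\sum_i i\,c_i\lambda_1^{L-i}>0$ are the paper's Steps 1 and 2 in different dress, with the expansion likewise deferred to the standard theory of linear recurrences. The one place you go beyond the paper is in flagging and sketching a proof that $a_1>0$ (which the paper asserts but does not argue); your positivity-plus-residue sketch is workable, and a quicker route is to set $m=\min_{1\le n\le L}G_n\lambda_1^{-n}>0$ and show $G_n\ge m\lambda_1^n$ for all $n$ by strong induction via the characteristic equation, whence $a_1=\lim_n G_n\lambda_1^{-n}\ge m>0$.
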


\begin{proof} We break the proof into three steps. We first analyze the positive roots (and show there is only one), then show the remaining roots are smaller in absolute value, and conclude by deducing the expansion.\\

\noindent \emph{Step 1: There is a unique positive root, which is simple.} We show there is a unique positive simple root, and further that it is larger than 1. Note $f(x) < 0$ for $0 \le x \le 1$, and \be xf'(x) \ = \ L x^{L} - \left((L-1) c_1 x^{L-1} + (L-2) c_2 x^{L-2} + \cdots + c_{L-1} x \right).\ee If $x$ is sufficiently large then $f(x) > 0$, and thus by the Intermediate Value Theorem there is a $\lambda_1 > 1$ such that $f(\lambda_1) = 0$. We show that $\lambda_1$ is simple and further that it is the only positive root. We claim $f(\lambda_1) = 0$ implies $\lambda_1 f'(\lambda_1) > 0$. This is because when we differentiate the $L$ coming down from $x^{L}$ exceeds the powers from the remaining terms (which are all of the same sign as the coefficients are positive and $\lambda_1 > 1$), and thus \be \lambda_1 f'(\lambda_1) \ > \ L f(\lambda_1) \ = \ 0.\ee This immediately yields $\lambda_1$ is a simple root (as otherwise $f'(\lambda_1) = 0$). A similar calculation shows $f'(x) > 0$ for $x \ge \lambda_1$ (as the $x^{L}$ grows faster than all other powers of $x$). Thus $f$ is increasing for $x \ge \lambda_1$. As $f(\lambda_1) = 0$ we find $f(x) > 0$ for $x>\lambda_1$, and there is a unique positive root.\\

%It is here that we use the special properties of the Kangaroo recurrence. Specifically, each coefficient in the recurrence is 0 or 1, and thus the negative coefficients in the derivative are always less in absolute value than the leading positive term.

\noindent \emph{Step 2: All the other roots are less than $\lambda_1$ in absolute value.} Let $f(\lambda) = 0$ with $\lambda \neq \lambda_1$; by Step 1 this implies $\lambda$ is either negative or it has a non-zero imaginary component. Assume $|\lambda| \ge \lambda_1$. From $f(\lambda) = 0$ we find \bea |\lambda|^{L} & \ = \ & \left|c_1\lambda^{L-1} + \cdots + c_{L-1}\lambda + c_L\right| \nonumber\\ & \le & c_1|\lambda|^{L-1} + \cdots + c_{L-1}|\lambda| + c_L \nonumber\\ & = & |\lambda|^{L} - f(|\lambda|) \nonumber\\ & \le & |\lambda|^{L}, \eea where the first inequality follows from the triangle inequality (and is an equality if and only if each quantity has the same phase) and the second inequality is due to the fact that $f$ is non-negative for $x \ge \lambda_1$ (it is an equality if and only if $|\lambda|=\lambda_1$). We therefore obtain a contradiction ($|\lambda|^{L} < |\lambda|^{L}$) if $|\lambda| > \lambda_1$.

We are left with showing that we cannot have $|\lambda| = \lambda_1$. We obtain the same contradiction unless each term in the sum has the same phase, which must be the same phase as $\lambda^L$: \be \lambda^L \ = \ c_1 \lambda^{L-1} + \cdots + c_L.\ee As $c_L$ is positive, this requires each $\lambda^r$ to be positive. As $c_1$ is positive, $\lambda^L$ and $c_1\lambda^{L-1}$ must both be positive; this forces $\lambda > 0$, which contradicts $\lambda$ being either negative or  having a non-zero imaginary component. Thus $|\lambda| < \lambda_1$ if $\lambda \neq \lambda_1$. \\

\noindent \emph{Step 3: Generalized Binet expansion.} We proved that there is a unique simple positive root of the characteristic polynomial, and all other roots are strictly less in absolute value. The claimed expansion follows from standard results on solving linear recurrence relations (see for example Section 3.7 of \cite{Go} for proofs). Briefly, linear combinations of solutions are solutions. If a root $\lambda$ is simple it generates the corresponding solution $\lambda^n$, while if it has multiplicity $r$ it generates solutions $\lambda^n, n \lambda^n, \dots, n^{r-1}\lambda^n$. The general solution is a linear combination of these, with coefficients chosen to match the initial conditions. \end{proof}

\ \\

We give the expansions from Theorem \ref{thm:genbinetf} for three important Kangaroo sequences: Fibonacci, Tribonacci, and what we call the Skiponacci sequences.\\

\noindent \emph{Fibonaccis:} For the recurrence $F_{n + 1} = F_n + F_{n - 1}$ with $F_1 = 1, F_2 = 2$ we have $F_n = a_1\lambda_1^n + a_2\lambda_2^n$, where
\be
\lambda_1 \; = \; \frac{1 + \sqrt{5}}{2}, \quad \lambda_2 \; = \;  \frac{1 - \sqrt{5}}{2}, \quad a_1 \; = \; \frac{5 + \sqrt{5}}{10}, \quad \text{and} \quad a_2 \; = \; \frac{5 - \sqrt{5}}{10}.
\ee

\ \\

\noindent \emph{Tribonaccis:} For the recurrence $T_{n + 1} = T_n + T_{n - 1} + T_{n - 2}$ with $T_1 = 1, T_2 = 2, T_3 = 4$ we have $T_n = a_1\lambda_1^n + a_2\lambda_2^n + a_3\lambda_3^n$, where
\begin{align}
\lambda_1 & \; = \; \frac{1}{3}\left(1 + \left(19 + 3\sqrt{33}\right)^{1/3} + \left(19 - 3\sqrt{33}\right)^{1/3}\right)\nonumber\\
\lambda_2 & \; = \; \frac{1}{3} - \frac{1}{6}\left(1 + i\sqrt{3}\right)\left(19 - 3\sqrt{33}\right)^{1/3} - \frac{1}{6}\left(1 - i\sqrt{3}\right)\left(19 + 3\sqrt{33}\right)^{1/3} \nonumber\\
\lambda_3 & \; = \; \frac{1}{3} - \frac{1}{6}\left(1 - i\sqrt{3}\right)\left(19 - 3\sqrt{33}\right)^{1/3} - \frac{1}{6}\left(1 + i\sqrt{3}\right)\left(19 + 3\sqrt{33}\right)^{1/3},
\end{align}
and \tiny
\bea a_1 & \; = \; & \frac{1}{162\sqrt{11}}\Bigg(54\sqrt{11} + \left(59\sqrt{3}\right)\left(19 + 3\sqrt{33}\right)^{1/3} + \left(-32\sqrt{3} + 18\sqrt{11}\right)\left(19 + 3\sqrt{33}\right)^{2/3}\nonumber\\ & & \ \ \ + \left(19 - 3\sqrt{33}\right)^{2/3}\left(32\sqrt{3} + 18\sqrt{11} + \left(19\sqrt{3}\right) \left(19 + 3\sqrt{33}\right)^{1/3}\right) - \left(\sqrt{3}\right)\left(19 - 3\sqrt{33}\right)^{1/3}\left(59 + 19\left(19 + 3\sqrt{33}\right)^{2/3}\right)\Bigg).\nonumber\\ %, \nonumber\\
\eea \normalsize

\ \\

\noindent \emph{Skiponaccis:} For the recurrence $S_{n + 1} = S_n + S_{n - 2}$ with $S_1 = 1, S_2 = 2, S_3 = 3$ we have $S_n = a_1\lambda_1^n + a_2\lambda_2^n + a_3\lambda_3^n$, where
\small
\begin{align}
\lambda_1 & \; = \; \frac{1}{3}\left(1 + \left(\frac{1}{2}\left(29 - 3\sqrt{93}\right)\right)^{1/3} + \left(\frac{1}{2}\left(29 + 3\sqrt{93}\right)\right)^{1/3}\right) \nonumber\\
\lambda_2 & \; = \; \frac{1}{3} - \frac{1}{6}\left(1 + i\sqrt{3}\right)\left(\frac{1}{2}\left(29 - 3\sqrt{93}\right)\right)^{1/3} + \frac{1}{6}\left(-1 + i\sqrt{3}\right)\left(\frac{1}{2}\left(29 + 3\sqrt{93}\right)\right)^{1/3} \nonumber\\
\lambda_3 & \; = \; \frac{1}{3} + \frac{1}{6}\left(-1 + i\sqrt{3}\right)\left(\frac{1}{2}\left(29 - 3\sqrt{93}\right)\right)^{1/3} - \frac{1}{6}\left(1 + i\sqrt{3}\right)\left(\frac{1}{2}\left(29 + 3\sqrt{93}\right)\right)^{1/3},
\end{align} \normalsize
and \tiny
\bea
a_1 & \; = \; & \frac{1}{162\sqrt{31}}\Bigg(54\sqrt{31} + 44\cdot2^{2/3}\sqrt{3}\left(29 + 3\sqrt{93}\right)^{1/3} + 2^{1/3}\left(-23\sqrt{3} + 9\sqrt{31}\right)\left(29 + 3\sqrt{93}\right)^{2/3} + \left(29 - 3\sqrt{93}\right)^{2/3}\nonumber\\ & & \ \ \  \cdot \left(2^{1/3}\left(23\sqrt{3} + 9\sqrt{31}\right) + 13\sqrt{3}\left(29 + 3\sqrt{93}\right)^{1/3}\right) - \sqrt{3}\left(29 - 3\sqrt{93}\right)^{1/3}\left(44\cdot2^{2/3} + 13\left(29 + 3\sqrt{93}\right)^{2/3}\right)\Bigg).\nonumber\\%, \nonumber\\
\eea\normalsize

%%%%%%%%%%%%%%%%%%%%%%%%%%%%%%%%%%%%%%%%%%%%%%%%%%%%%%%%%%%%%%%%%%%%%%%%%%%%%%%%%%%%%%%%%%%%%%%%%%%%%%%%%%%%%%%%%%%%%%%%%%%%
%%%%%%%%%%%%%%%%%%%%%%%%%%%%%%%%%%%%%%%%%%%%%%%%%%%%%%%%%%%%%%%%%%%%%%%%%%%%%%%%%%%%%%%%%%%%%%%%%%%%%%%%%%%%%%%%%%%%%%%%%%%%
%%%%%%%%%%%%%%%%%%%%%%%%%%%%%%%%%%%%%%%%%%%%%%%%%%%%%%%%%%%%%%%%%%%%%%%%%%%%%%%%%%%%%%%%%%%%%%%%%%%%%%%%%%%%%%

\medskip

\noindent MSC2010: 11B39, 11B05  (primary) 65Q30, 60B10 (secondary)

\ \\

\end{document}